\newtheorem{theorem}{Theorem}[section] 
\newtheorem{proposition}[theorem]{Proposition} 
\newtheorem{corollary}[theorem]{Corollary} 
\newtheorem{lemma}[theorem]{Lemma} 
\theoremstyle{definition}
\newtheorem{example}[theorem]{Example}
\newtheorem{conjecture}[theorem]{Conjecture}
\Crefname{conjecture}{Conjecture}{Conjectures}
\newcommand{\blue}{blue!65!white}
\newcommand{\mydef}[1]{\textcolor{\blue}{\emph{#1}}} 
\newcommand{\inv}{\operatorname{Inv}} 
\newcommand{\cov}[1]{\lessdot_{#1}} 
\newcommand{\Weak}[1]{\operatorname{Weak}({#1})} 
\newcommand{\wo}{{w_\circ}} 
\newcommand{\sq}[1]{{\sf #1}} 
\newcommand{\Q}{\sq{Q}} 
\newcommand{\s}{\sq{s}} 
\newcommand{\sinv}{\overline{\s}} 
\renewcommand{\r}{\sq{r}} 
\newcommand{\R}{\sq{R}} 
\renewcommand{\c}{\sq{c}} 
\renewcommand{\t}{\sq{t}} 
\renewcommand{\u}{\sq{u}} 
\newcommand{\w}{\sq{w}} 
\newcommand{\wrc}[1]{\sq{w}_\mathcal{R}{(#1)}} 
\newcommand{\ls}[1]{\operatorname{\ell}_\mathcal{S}({#1})} 
\newcommand{\lr}[1]{\operatorname{\ell}_\mathcal{R}({#1})} 
\newcommand{\Sort}[1]{\operatorname{Sort}({#1})} 
\NewDocumentCommand\Skipset{O{c}}{{\mathcal{C}_{#1}}} 
\newcommand{\Camb}[1]{\operatorname{Camb}({#1})} 
\newcommand{\Cambsort}[1]{\operatorname{Camb}_\mathrm{Sort}({#1})} 
\newcommand{\Cambclus}[1]{\operatorname{Camb}_\mathrm{SC}({#1})} 
\newcommand{\Cambnc}[1]{\operatorname{Camb}_\mathrm{NC}({#1})} 
\newcommand{\mCambsort}[2]{\operatorname{Camb}^{(#1)}_\mathrm{Sort}({#2})} 
\newcommand{\mCambclus}[2]{\operatorname{Camb}^{(#1)}_\mathrm{SC}({#2})} 
\newcommand{\mCambnc}[2]{\operatorname{Camb}^{(#1)}_\mathrm{NC}({#2})} 
\newcommand{\leqr}{\leq_{\mathcal{R}}} 
\newcommand{\mleq}{\leq_{(m)}} 
\newcommand{\mCamb}[2]{\operatorname{Camb}^{(#1)}({#2})} 
\newcommand{\Abs}[1]{\operatorname{Abs}({#1})} 
\newcommand{\NC}[1]{\operatorname{NC}({#1})} 
\newcommand{\mNC}[2]{\operatorname{NC}^{(#1)}({#2})} 
\newcommand{\NCL}[1]{\operatorname{NCL}({#1})} 
\newcommand{\subS}[1]{\operatorname{SC}_{\mathcal{S}}({#1})} 
\newcommand{\dimr}[2]{\lvert {#1} \rvert_{\leq #2}} 
\newcommand{\Fwith}[1]{\mathcal{F}_{\supset #1}} 
\newcommand{\Fwithout}[1]{\mathcal{F}_{\not\supset #1}} 
\newcommand{\QCamb}[1]{\Q(#1)} 
\newcommand{\QmCamb}[2]{\Q^{(#1)}(#2)} 
\newcommand{\CambInt}[2]{\operatorname{CC}_{#1}(#2)} 
\newcommand\restr[2]{{
  \left.\kern-\nulldelimiterspace 
  #1 
  \vphantom{\big|} 
  \right|^{#2} 
  }}
\newcommand\restri[2]{{
  \left.\kern-\nulldelimiterspace 
  #1 
  \vphantom{\big|} 
  \right|_{#2} 
  }}
\newcommand{\Artinmon}{\boldsymbol{B}^+}
\newcommand{\bwo}{\boldsymbol{w_\circ}} 
\newcommand{\be}{\boldsymbol{e}} 
\newcommand{\bs}{\boldsymbol{s}}
\newcommand{\mhead}{\texorpdfstring{$m$}{m}}
\title{A new definition for \mhead-Cambrian lattices}
\author{Clément Chenevière\thanks{\href{mailto:clement.cheneviere@lisn.fr}{clement.cheneviere@lisn.fr}. Supported by the ANR--FWF project PAGCAP (ANR-21-CE48-0020).}\addressmark{1},
  \and Wenjie Fang\thanks{\href{mailto:wenjie.fang@univ-eiffel.fr}{wenjie.fang@univ-eiffel.fr}. Partially supported by the ANR--FWF project PAGCAP (ANR-21-CE48-0020).}\addressmark{2},
  \and Corentin Henriet\thanks{\href{mailto:henriet@irif.fr}{henriet@irif.fr}.}\addressmark{3}\addressmark{4}}
\address{\addressmark{1}Université Paris-Saclay, CNRS, LISN, Orsay, France \\ \addressmark{2}Univ Gustave Eiffel, CNRS, LIGM, F-77454 Marne-la-Vallée, France \\ \addressmark{3}IRIF, Université Paris-Cité, Paris, France \\ \addressmark{4}DiMaI, Università degli Studi di Firenze, Florence, Italy}
\abstract{
  The Cambrian lattices, introduced in (Reading, 2006), generalize the Tamari lattice to any choice of Coxeter element in any finite Coxeter group. 
  They are further generalized to the $m$-Cambrian lattices (Stump, Thomas, Williams, 2015). 
  However, their definitions do not provide a practical setup to work with combinatorially. 

  In this paper, we provide a new equivalent definition of the $m$-Cambrian lattices on simple objects called $m$-noncrossing partitions, using a simple and effective comparison criterion. 
  It is obtained by showing that each interval has a unique maximal chain that is \textsf{c}-increasing, which is computed by a greedy algorithm. 
  Our proof is uniform, involving all Coxeter groups and all choices of Coxeter element at the same time.}
\keywords{m-Cambrian lattice, m-noncrossing partition, c-increasing chain, Tamari lattice, subword complex}
\begin{document}

\maketitle

\section{Introduction}

  The Tamari lattice, first defined by Tamari~\cite{tamari_bracketings_1962}, enjoys many links to various domains in combinatorics, see~\cite{pons_hdr_2023} and the references therein.
  It can be defined in various perspectives, thus also has many generalizations.
  Among them are the Cambrian lattices proposed by Reading~\cite{reading_cambrian} in the field of Coxeter combinatorics, defined for all finite Coxeter groups, and parameterized by a choice of Coxeter element.
  Cambrian lattices were further generalized by Stump, Thomas and Williams in~\cite{stump_cataland_2018} to the \emph{$m$-Cambrian lattices}, with several equivalent definitions. 
  However, even for linear type~$A$, these descriptions remain difficult to manipulate combinatorially, which hinders our understanding of these lattices.

  In this work, we propose a new definition of the $m$-Cambrian lattice over so-called \emph{$m$-noncrossing partitions}, which are multichains of $m$ noncrossing partitions in the absolute order.
  We define an easy comparison criterion on these simple objects involving only the absolute order and the Cambrian lattice locally, as illustrated in \Cref{fig:scheme}. We prove that it gives an order relation, which is isomorphic to the $m$-Cambrian lattice. 

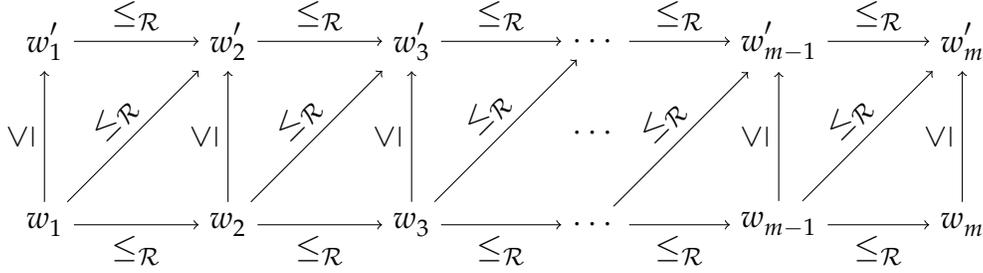
\begin{figure}
  \centering
  \begin{tikzpicture}[scale=1.22]
    \node (W1) at (0, 0) {$w_1$};
    \node (W2) at (2, 0) {$w_2$};
    \node (W3) at (4, 0) {$w_3$};
    \node (W4) at (6, 0) {$\cdots$};
    \node (W5) at (8, 0) {$w_{m-1}$};
    \node (W6) at (10, 0) {$w_m$};
    \draw[->] (W1) -- node [below] {$\leqr$} (W2);
    \draw[->] (W2) -- node [below] {$\leqr$} (W3);
    \draw[->] (W3) -- node [below] {$\leqr$} (W4);
    \draw[->] (W4) -- node [below] {$\leqr$} (W5);
    \draw[->] (W5) -- node [below] {$\leqr$} (W6);
    
    \node (Z1) at (0, 2) {$w'_1$};
    \node (Z2) at (2, 2) {$w'_2$};
    \node (Z3) at (4, 2) {$w'_3$};
    \node (Z4) at (6, 2) {$\cdots$};
    \node (Z5) at (8, 2) {$w'_{m-1}$};
    \node (Z6) at (10, 2) {$w'_m$};
    \node (Mid) at (6, 1) {$\cdots$};
    \draw[->] (Z1) -- node [above] {$\leqr$} (Z2);
    \draw[->] (Z2) -- node [above] {$\leqr$} (Z3);
    \draw[->] (Z3) -- node [above] {$\leqr$} (Z4);
    \draw[->] (Z4) -- node [above] {$\leqr$} (Z5);
    \draw[->] (Z5) -- node [above] {$\leqr$} (Z6);

    \draw[->] (W1) -- node [above, rotate=45] {$\leqr$} (Z2);
    \draw[->] (W2) -- node [above, rotate=45] {$\leqr$} (Z3);
    \draw[->] (W3) -- node [above, rotate=45] {$\leqr$} (Z4);
    \draw[->] (W4) -- node [above, rotate=45] {$\leqr$} (Z5);
    \draw[->] (W5) -- node [above, rotate=45] {$\leqr$} (Z6);

    \draw[->] (W1) -- node [above, rotate=90] {$\leq$} (Z1);
    \draw[->] (W2) -- node [above, rotate=90] {$\leq$} (Z2);
    \draw[->] (W3) -- node [above, rotate=90] {$\leq$} (Z3);
    \draw[->] (W5) -- node [above, rotate=90] {$\leq$} (Z5);
    \draw[->] (W6) -- node [above, rotate=90] {$\leq$} (Z6);
  \end{tikzpicture}
  \caption{Schematic representation of the comparability condition $\mleq$ on $m$-non\-crossing partitions giving an order isomorphic to the $m$-Cambrian lattice.}
  \label{fig:scheme}
\end{figure}

  To show that our new definition of the $m$-Cambrian lattice is equivalent to the original ones in~\cite{stump_cataland_2018}, we resort to another object called \emph{$\c$-increasing chains}, showing that they are canonical certificates of intervals in $m$-Cambrian lattices.
  We also produce a simple greedy algorithm to compute such $\c$-increasing chains.
  To this end, we perform several reductions to simpler cases, involving all parabolic subgroups and all choices of Coxeter elements at the same time. 
  Our proofs, already sketched in~\cite{cheneviere_phd_2023}, rely on several definitions of $m$-Cambrian lattices, especially on noncrossing partitions and subword complexes.

  In the process, we also define a poset structure on intervals of any Cambrian lattice, whose $m$-multichains correspond to intervals in the $m$-Cambrian lattice. 
  This poset is new to our knowledge, even for linear type~$A$, where other poset on Tamari intervals have been studied~\cite{combe_cubical_2023}. 
  It might be a first step to understand the relation conjectured in~\cite{stump_cataland_2018} between $m$-Cambrian intervals for linear type A and intervals in the $m$-Tamari lattice introduced in~\cite{bergeron_higher_2012}, while the later were already enumerated in~\cite{bousquet_number_2011}.

  The rest of this article is organized as follows.
  We lay down the basic definitions in Coxeter combinatorics and of $m$-Cambrian lattices in \Cref{sec:prelim}.
  Then in \Cref{sec:new-def}, we detail our new definition of the $m$-Cambrian lattices and its connection with $\c$-increasing chains.
  We then sketch proof ideas of our results in \Cref{sec:proof-idea}.
  We conclude by \Cref{sec:cambint_poset} with some discussion on the new poset on Cambrian intervals. 

\section{Preliminaries} \label{sec:prelim}

\subsection{Coxeter combinatorics}

  A \mydef{Coxeter group} is a group $W$ with a finite set $\mathcal{S}$ of generators with $s^2 = e$ for $s \in S$, subjected to relations $(st)^{m_{s,t}} = e$ for $m_{s,t} \geq 2$.
  As a convention, the absence of relation between $s, t$ is denoted by $m_{s,t} = \infty$.
  The pair $(W, \mathcal{S})$ is called a \mydef{Coxeter system}, whose \mydef{rank} is defined as $\lvert \mathcal{S} \rvert$.
  The elements of $\mathcal{S}$ are called \mydef{simple reflections}, while the conjugates of elements in $\mathcal{S}$ are called \mydef{reflections}, and we denote by $\mathcal{R} = \{ w^{-1} s w \mid s \in \mathcal{S}, w \in W \}$ the set of reflections.
  In the following, we consider only finite Coxeter systems, whose study can be reduced to irreducible ones that have been fully classified in \cite{coxeter_groups_1935}.
  Given a Coxeter system $(W, \mathcal{S})$ and $J \subset \mathcal{R}$ conjugated to a subset of $\mathcal{S}$, the \mydef{parabolic subgroup} $W_J$ is the subgroup of $W$ generated by $J$. When $J \subset \mathcal{S}$, we refer to $W_J$ as a \mydef{standard parabolic subgroup}. For $J = \mathcal{S} \setminus \{s\}$, we also denote $W_J$ by $W_{\langle s \rangle}$. 

  We can also view a Coxeter system $(W, \mathcal{S})$ as words in the alphabet $\mathcal{S}$ up to equivalence relations.
  A \mydef{reduced word} of $w \in W$ is a shortest word in $\mathcal{S}$ that gives $w$ as product, which may not be unique, and the \mydef{(Coxeter) length} of $w$, denoted by $\ls{w}$, is the length of any reduced word.
  For $w \in W$, a word $\u$ is \mydef{initial} (resp. \mydef{final}) for $w$ if there is some word $\w$ giving $w$ that starts (resp. ends) with $\u$.
  Two words $\u, \w$ are \mydef{commutation equivalent}, denoted by $\u \equiv \w$, if we can go from $\u$ to $\w$ by available commutations of letters in $W$. 
  The perspective of words leads naturally to the \mydef{Artin monoid} $\Artinmon$ associated to $(W, \mathcal{S})$, which is the free monoid generated by $\mathcal{S}$ quotiented by the relations $(st)^{m_{s,t}} = e$.
    Following conventions in~\cite{stump_cataland_2018}, we use \textsf{sans-serif font} for letters and words, in contrast to the equivalent generators and elements of $W$ in normal font, and elements of $\Artinmon$ in \textbf{bold font}.

  Given a Coxeter system $(W, \mathcal{S})$, the \mydef{(right) weak order} on $W$, denoted by $\Weak{W}$, is a partial order on $W$ such that $u \leq w$ if and only if there is some $v \in W$ such that $w = uv$ and $\ls{w} = \ls{u} + \ls{v}$.
  It can also be defined with the \mydef{(left) inversion set} of $w \in W$, denoted by $\inv(w)$, which is the set of reflections $t$ such that $\ls{tw} < \ls{w}$.
  Then $u \leq w$ if and only if $\inv(u) \subseteq \inv(w)$.
  It is well-known that $\Weak{W}$ is a lattice, thus has a maximal element, denoted by $\wo$.

  Replacing the set of simple reflections $\mathcal{S}$ by the set of all reflections $\mathcal{R}$ in the definition of the Coxeter length, we obtain the notion of \mydef{absolute length} of an element $w \in W$, denoted by $\lr{w}$.
  The \mydef{absolute order} $\Abs{W}$ can be defined similarly as the weak order, that is $u \leqr w$ whenever there is $v \in W$ with $w = uv$ and $\lr{w} = \lr{u} + \lr{v}$.

  A \mydef{Coxeter word} $\c$ in $(W, \mathcal{S})$ is a word with each element of $\mathcal{S}$ appearing exactly once.
  A \mydef{Coxeter element} $c$ is the product of letters in a Coxeter word $\c$ in $W$.
  Given a Coxeter element $c$, the interval $[e, c]$ in $\Abs{W}$ is called the \mydef{noncrossing partition lattice}, denoted by $\NCL{W, c}$.
  The elements in $\NCL{W, c}$ are called \mydef{$c$-noncrossing partitions}, and their set is denoted by $\NC{W, c}$.
  It is well-known that the \mydef{Kreweras complement} $cw^{-1}$ of $w \in \NC{W, c}$ is also a $c$-noncrossing partition.
  
\subsection{Cambrian lattice and \mhead-Cambrian lattice}

  The Cambrian lattice, first defined by Reading (\textit{cf.}~\cite{reading_cambrian}), is a generalization of the Tamari lattice to all finite Coxeter groups.
  Given a Coxeter word $\c$, we define the \mydef{$\c$-sorting word} of an element $w \in W$, denoted by $\w(\c)$, to be the leftmost subword of $\c^{\infty}$ that is a reduced word of $w$.
  Let $I_k \subseteq \mathcal{S}$ be the set of letters in $\w(\c)$ from the $k$-th copy of $\c$. The element $w$ is \mydef{$\c$-sortable} if $I_{k+1} \subseteq I_k$ for all $k$, meaning that if a letter $\textsf{s}_i$ does not occur in some copy of $\c$ for $\w(\c)$, then it never occurs afterwards.
  We denote by $\Sort{W, \c}$ the set of $\c$-sortable elements in $W$ with the choice of Coxeter word $\c$.
  The ``sortable version'' of the \mydef{Cambrian lattice}, denoted by $\Cambsort{W, \c}$, is the restriction of the weak order to $\Sort{W, \c}$.
  
  We may also construct Cambrian lattices using subword complexes introduced in~\cite{knutson_subword_2004}.
  Given an $\mathcal{S}$-word $\Q$ of length $p$, an element $w \in W$ and an integer $a$ with $\ls{w} \leq a \leq p$, the \mydef{subword complex} $\subS{\Q, w, a}$ is a simplicial complex on vertex set $\{1, \ldots, p\}$ whose \mydef{facets} are subsets of positions of letters in $\Q$ whose removal gives an $\mathcal{S}$-word of $w$ of length $a$.
  The word $\Q$ is called the \mydef{search word} of $\subS{\Q, w, a}$.
  It is clear that all facets are of size $p - a$.
  Given two adjacent facets $I, J$ of $\subS{\Q, w, a}$, that is, $\lvert I \cap J \rvert = p - a - 1$, there are some $i \neq j$ such that $I \setminus \{i\} = J \setminus \{j\}$.
  Suppose that $i < j$, then we say that $J$ is obtained from $I$ with an \mydef{increasing flip}.
  The \mydef{flip poset} on facets of $\subS{\Q, w, a}$ is then defined as the transitive closure of increasing slips.
  Given a Coxeter word $\c$, we consider the $\c$-sorting word $\sq{\wo}(\c)$ of the maximal element $\wo$ in $\Weak{W}$.
  The ``subword complex version'' of the \mydef{Cambrian lattice}, denoted by $\Cambclus{W, \c}$, is the flip poset of the subword complex $\subS{\c \sq{\wo}(\c), \wo, \ls{\wo}}$.
  The word $\c \sq{\wo}(\c)$ is called the \mydef{Cambrian search word} $\QCamb{W, \c}$. 

  The third construction of Cambrian lattices is based on noncrossing partitions.
  We write the $\c$-sorting word of $\wo$ as $\sq{\wo}(\c) = \s_1 \ldots \s_r$.
  We take $t_i \in W$ the product of the word $\s_1 \ldots \s_{i-1} \s_i \s_{i-1} \ldots \s_1$.
  It is clear that $t_i \in \mathcal{R}$ and $t_i \neq t_j$ for $i \neq j$.
  In fact, every reflection is obtained in this way.
  The total order $\leq_{\c}$ on $\mathcal{R}$ defined by $t_1 <_{\c} t_2 < \cdots <_{\c} t_{r}$ is called the \mydef{reflection order} associated to $\c$.
  We also express this total order as a special $\mathcal{R}$-word $\R(\c) = \t_1 \ldots \t_r$.
  It is known \cite[Theorem~3.5]{athanasiadis_shellability_2007} that each $c$-noncrossing partition $w \in \NC{W, c}$ has a unique reduced $\mathcal{R}$-word, denoted by $\wrc{\c}$, such that $\wrc{\c}$ is a subword of $\R(\c)$.
  We call $\wrc{\c}$ the \mydef{$\c$-increasing word} of $w$.

  By concatenating the $c$-increasing word of $w$ to that of its Kreweras complement $cw^{-1}$, we get a reduced $\mathcal{R}$-word for $c$ that is a subword of $\R(\c)^2$, which is called a \mydef{$1$-factorization} of $\c$.
  Such $1$-factorizations are in bijection with $c$-noncrossing partitions, and can be written in \mydef{colored reflections} as $\r_1^{(0)} \ldots \r_k^{(0)} \r_{k+1}^{(1)} \ldots \r_n^{(1)}$, where the superscript $(0)$ (resp. $(1)$), called \mydef{color}, indicates the letter being in the first (resp. second) copy of $\R(\c)$.
  We extend the reflection order $\leq_{\c}$ to colored reflections, ordering first by color, then by the position of the reflections in $\R(\c)$.
  By regarding $\R(\c)^2$ as two copies of $\R(\c)$ with color $0$ and $1$, we can see a $1$-factorization of $\c$ as a set of colored reflections in $\R(\c)^2$.
  Given a $1$-factorization $\w$ containing some $\r^{(0)}$, as colored reflections in $\w$ come in increasing order with $\leq_{\c}$, we can write $\w = \w' \cdot \r^{(0)} \cdot \w'' \cdot \w'''$, where $\w''$ is the subword of $\w$ with all letters between $\r^{(0)}$ and $r^{(1)}$ in $\leq_{\c}$.
  We define the \mydef{increasing rotation} of $\w$ at the \mydef{rotated reflection} $r^{(0)}$ to be the new $\mathcal{R}$-word $\w' \cdot \overline{\w''} \cdot r^{(1)} \cdot \w'''$, where $\overline{\w''}$ is obtained by conjugating each letter in $\w''$ by $r$.
  By \cite{stump_cataland_2018}, such a word is also a $1$-factorization of $\c$.
  The ``noncrossing version'' of the \mydef{Cambrian lattice} $\Cambnc{W, c}$ is the poset on $1$-factorizations of $\c$ given by the covering relation where the elements covering $\w$ are exactly increasing rotations of $\w$.

\begin{figure}
  \centering
  \begin{minipage}[c]{0.7\linewidth}
    \begin{tabular}{ccccccccc}
      \toprule
      \multirow{2}{*}{Element} & \multirow{2}{*}{$\operatorname{Camb}_{\textrm{Sort}}$} & \multirow{2}{*}{$\operatorname{Camb}_{\textrm{SC}}$} & \multicolumn{6}{c}{$\operatorname{Camb}_{\textrm{NC}}$} \\ 
      & & & $\s$ & $\u$ & $\t$ & $\s$ & $\u$ & $\t$ \\
      \midrule
      1 & $\phantom{\s} \varepsilon$ & $\cancel{\s} \, \cancel{\t} \, \s \, \t \, \s$ & $*$ & $ $ & $*$ & $ $ & $ $ & $ $ \\
      2 & $\s|$                   & $\s \, \cancel{\t} \, \cancel{\s} \, \t \, \s$ & $ $ & $*$ & $ $ & $*$ & $ $ & $ $ \\
      3 & $\s \t | \phantom{\t}$  & $\s \, \t \, \cancel{\s} \, \cancel{\t} \, \s$ & $ $ & $ $ & $*$ & $ $ & $*$ & $ $ \\
      4 & $\t |$                  & $\cancel{\s} \, \t \, \s \, \t \, \cancel{\s}$ & $*$ & $ $ & $ $ & $ $ & $ $ & $*$ \\
      5 & $\s \t | \s$            & $\s \, \t \, \s \, \cancel{\t} \, \cancel{\s}$ & $ $ & $ $ & $ $ & $*$ & $ $ & $*$ \\
      \bottomrule
    \end{tabular}
  \end{minipage}
  \begin{minipage}[c]{0.2\linewidth}
    \begin{tikzpicture}[edge/.style={color=blue!95!black, thick}]
      \node (node_0) at (0,0) { $1$ };
      \node (node_1) at (1.5,1.5) { $4$ };
      \node (node_4) at (0,3) { $5$ };
      \node (node_2) at (-1,1) { $2$ };
      \node (node_3) at (-1,2) { $3$ };
      
      \draw [edge,->] (node_0) -- (node_1);
      \draw [edge,->] (node_0) -- (node_2);
      \draw [edge,->] (node_2) -- (node_3);
      \draw [edge,->] (node_3) -- (node_4);
      \draw [edge,->] (node_1) -- (node_4);
    \end{tikzpicture}
  \end{minipage}
  \caption{Three versions of $\Camb{\mathfrak{S_3}, \s \t}$. Here, $\s = (12)$, $\t = (23)$, $\u = (13)$.}
  \label{fig:cambrian-example}
\end{figure}
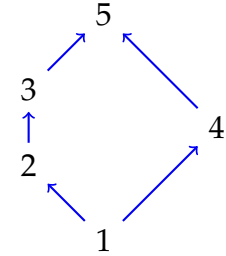

  For a comparison of the three versions of the Cambrian lattice, see \Cref{fig:cambrian-example}. It was shown in~\cite[theorems 5.7.3 and 6.8.6]{stump_cataland_2018} that the three definitions are equivalent.

\begin{proposition} \label{prop:cambrian-equiv}
  We have $\Cambsort{W, \c} \simeq \Cambclus{W, \c} \simeq \Cambnc{W, \c}$ for any Coxeter system $(W, \mathcal{S})$ and any Coxeter word $\c$, with isomorphisms explicitly given.
\end{proposition}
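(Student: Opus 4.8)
The plan is to prove the result as a chain of two explicit isomorphisms, $\Cambsort{W, \c} \simeq \Cambclus{W, \c}$ and $\Cambclus{W, \c} \simeq \Cambnc{W, c}$, treating the subword-complex model as the central object since its facets and flips are the most rigid data. Throughout I would fix the Cambrian search word $\QCamb{W, \c} = \c \, \sq{\wo}(\c)$ and recall that every facet of $\subS{\QCamb{W, \c}, \wo, \ls{\wo}}$ removes exactly $n = \lvert \mathcal{S} \rvert$ letters, so facets are precisely the ways of embedding a reduced word of $\wo$ into $\QCamb{W, \c}$. As a sanity check, removing the whole initial copy of $\c$ leaves $\sq{\wo}(\c)$, the facet that should correspond to the bottom sortable element $e$.

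For the first isomorphism, I would build a bijection $\Sort{W, \c} \to \{\text{facets}\}$ by a greedy reading. Given a $\c$-sortable $w$ with $\c$-sorting word $\w(\c)$, the nested condition $I_{k+1} \subseteq I_k$ on its blocks lets me align $\w(\c)$ with the fixed copy of $\c$ inside $\QCamb{W, \c}$ and complete it canonically to a reduced word of $\wo$ using letters of $\sq{\wo}(\c)$; the removed positions then form a facet $F_w$. I would check injectivity and surjectivity directly from the uniqueness of $\c$-sorting words, and then show that an increasing flip of $F_w$ changes the associated element by right-multiplication by a single simple reflection that enlarges its inversion set by one, so that increasing flips match exactly the cover relations of the weak order restricted to $\Sort{W, \c}$. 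This identifies the flip poset $\Cambclus{W, \c}$ with $\Cambsort{W, \c}$.

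For the second isomorphism, I would use the reflection data of facets. To each kept position of $\QCamb{W, \c}$ one associates, via the conjugation $t_i = \s_1 \cdots \s_{i-1}\s_i \s_{i-1}\cdots \s_1$, a reflection, and reading these in order produces a reduced $\mathcal{R}$-word that is a subword of $\R(\c)^2$, i.e.\ a $1$-factorization of $\c$; by construction it splits as $\wrc{\c}$ for a $c$-noncrossing partition $w$ followed by the $\c$-increasing word of its Kreweras complement $c w^{-1}$. I would prove this map is a bijection onto $1$-factorizations using the uniqueness statement of \cite{athanasiadis_shellability_2007}, and then verify that a single increasing flip corresponds exactly to an increasing rotation: the flipped position determines the rotated reflection $\r^{(0)}$, while the reindexing of the intervening letters forced by the flip is precisely the conjugation $\w'' \mapsto \overline{\w''}$ together with the color change $\r^{(0)} \mapsto r^{(1)}$ appearing in the definition of increasing rotation. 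This matches cover relations and yields $\Cambclus{W, \c} \simeq \Cambnc{W, c}$.

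The main obstacle is this last matching — that an increasing flip equals an increasing rotation. The bijections on underlying sets follow from standard uniqueness facts, but showing they are order isomorphisms requires controlling how the reflection order $\leq_{\c}$, the two colors, and the Kreweras complement interact under a flip, in particular that conjugating the entire intervening block $\w''$ by the rotated reflection is exactly the effect of moving one brick in the subword complex. I would isolate this in a lemma tracking the root configuration of a facet across a flip, which is where essentially all of the bookkeeping lives; the remaining steps are then routine. These constructions recover the equivalences established in \cite{reading_cambrian, stump_cataland_2018}, now with the isomorphisms made explicit.
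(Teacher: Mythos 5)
First, a point of comparison: the paper does not actually prove \Cref{prop:cambrian-equiv}. It is stated as background, and the paper's entire justification is the sentence preceding it, citing \cite[theorems 5.7.3 and 6.8.6]{stump_cataland_2018}; the phrase ``with isomorphisms explicitly given'' refers to the constructions in that reference. Your proposal therefore sets out to reprove two substantial theorems (due to Reading and to Stump--Thomas--Williams) rather than to supply the citation, and judged on those terms it contains one concrete error and defers exactly the steps that make those theorems theorems.

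The error is in your facet-to-$1$-factorization map. You attach to a position $i$ of $\QCamb{W,\c}$ the reflection $t_i$ obtained by conjugating the $i$-th letter by the full prefix of the search word; as written this depends on $i$ alone, whereas the correct assignment must depend on the facet: the letter at a position $i$ of a facet $I$ has to be conjugated by the product of the letters preceding $i$ that are \emph{not} in $I$ (the root function of the subword-complex literature, see \cite{pilaud_brick_2015}). The paper's own \Cref{fig:cambrian-example} refutes the position-only rule: for $W=\mathfrak{S}_3$, $\c = \s\t$, $\QCamb{W,\c} = \s\t\s\t\s$, the facet $\{2,3\}$ must correspond to $\u^{(0)}\s^{(1)}$ (row 2 of the table), but your rule assigns position $2$ the reflection $\s\t\s = \u$ and position $3$ the reflection $\s\t\s\t\s = \t$, and the resulting word $\u\,\t$ cannot have product $c = st$, since $\u\t = \s\t$ would force $\u = \s$; the root function instead gives $\u$ at position $2$ (conjugate $\t$ by $\s$) and $\s$ at position $3$ (conjugate $\s$ by $\s$), as required. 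Beyond this, two gaps: (i) in your first bijection, ``complete it canonically to a reduced word of $\wo$'' hides a rule that is left undefined and that matters --- in the same example both $\{1,2,3\}$ and $\{1,2,5\}$ are complements of facets whose embedded reduced word begins with $\s\t$ at positions $1,2$, corresponding to the distinct sortable elements $sts$ and $st$, so the completion must be specified and proven compatible with flips; and (ii) the two matchings you postpone (increasing flips $=$ weak-order covers on sortables, and increasing flips $=$ increasing rotations) are not bookkeeping that can be isolated in a routine lemma --- they are precisely the content of the two cited theorems of \cite{stump_cataland_2018}. In short, the architecture of your plan does mirror how the literature proves the result, but the one map you write down explicitly is wrong, and the parts you present as checks are where all the substance lies.
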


  In \cite{stump_cataland_2018}, Stump, Thomas and Williams further generalized the Cambrian lattices to \emph{$m$-Cambrian lattices}, which also comes in three flavors.
  We see that the notions of weak order and $\c$-sortable elements also apply to the Artin monoid $\Artinmon$.
  Hence, the ``sortable version'' of the \mydef{$m$-Cambrian lattice} $\mCambsort{m}{W, \c}$ is the restriction of the weak order for the Artin monoid to $\c$-sortable elements of the interval $[\be, \bwo^m]$.
  The ``subword complex version'' of the \mydef{$m$-Cambrian lattice} $\mCambclus{m}{W, \c}$ is simply the flip poset of the subword complex $\subS{\c \sq{\wo}^m(\c), \wo^m, m \ls{\wo}}$, with $\wo$ the maximal element of $\Weak{W}$ and $\sq{\wo}(\c)$ its $\c$-sorting word.
  The word $\c \sq{\wo}^m(\c)$ is called the \mydef{$m$-Cambrian search word} $\QmCamb{m}{W, \c}$.
  For the ``noncrossing version'', we generalize the notion of a $1$-factorization of $\c$ to an \mbox{\mydef{$m$-factorization}} of $\c$, which is a subword of $\R(\c)^{m+1}$ that is a reduced $\mathcal{R}$-word of $c$.
  Such an $m$-factorization can be seen as a word of colored reflections with $m + 1$ colors, one for reflections from each copy of $\R(\c)$.
  The notion of increasing rotation can be extended to apply to any colored reflection $r^{(i)}$ of color $i < m$ and turn it into $r^{(i+1)}$ while conjugating all colored reflections in between in an $m$-factorization.
  The transitive closure of increasing rotations gives the ``noncrossing version'' of the \mydef{$m$-Cambrian lattice} $\mCambnc{m}{W, \c}$.
  Again, similar to \Cref{prop:cambrian-equiv}, the three definitions are equivalent.

\begin{proposition}[\cite{stump_cataland_2018}] \label{prop:m-cambrian-equiv}
  We have $\mCambsort{m}{W, \c} \simeq \mCambclus{m}{W, \c} \simeq \mCambnc{m}{W, \c}$ for any Coxeter system $(W, \mathcal{S})$, Coxeter word $\c$, and $m \geq 1$, with isomorphisms explicitly given.
\end{proposition}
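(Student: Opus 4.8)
The plan is to bootstrap from the case $m = 1$ (\Cref{prop:cambrian-equiv}) by replacing every object appearing there with its $m$-fold analogue: the maximal element $\wo$ by $\wo^m$, the Cambrian search word $\c\sq{\wo}(\c)$ by the $m$-Cambrian search word $\QmCamb{m}{W,\c} = \c\sq{\wo}^m(\c)$, and the doubled reflection word $\R(\c)^2$ by $\R(\c)^{m+1}$. Concretely I would construct two explicit order-isomorphisms, $\mCambsort{m}{W,\c} \simeq \mCambclus{m}{W,\c}$ and $\mCambclus{m}{W,\c} \simeq \mCambnc{m}{W,\c}$, verifying in each case that the chosen bijection carries covering relations to covering relations. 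A preliminary lemma would record that $\bwo^m$ is $\c$-sortable in the Artin monoid with $\c$-sorting word $\sq{\wo}^m(\c)$, so that the suffix of $\QmCamb{m}{W,\c}$ indeed spells $\wo^m$ and the three ambient combinatorial sets have matching cardinalities.

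For the first isomorphism I would build a facet-extraction map $\Phi$ from the $\c$-sortable elements of $[\be, \bwo^m]$ to the facets of $\subS{\c\sq{\wo}^m(\c), \wo^m, m\ls{\wo}}$, generalizing the $m=1$ construction. Given a $\c$-sortable $\bw$, its $\c$-sorting word is the leftmost reduced subword of $\c^m$; I match it greedily inside the search word and let $\Phi(\bw)$ be the complementary $n$-subset of positions, whose removal leaves a reduced word for $\wo^m$. The inverse reconstructs $\bw$ from the complement of a facet. The core point is that increasing flips correspond exactly to weak-order covers among sortables; the leading $\c$-block of the search word guarantees that every simple reflection is available from the start, so flips are never obstructed and the flip poset is acyclic, exactly as in the case $m = 1$.

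For the second isomorphism I would use the root function of the subword complex. To a facet $F = \{f_1 < \cdots < f_n\}$ I associate, at each $f_j$, the reflection obtained by conjugating the simple reflection in position $f_j$ by the product of the letters of $F$ lying strictly to its left; the resulting word of $n$ reflections has product $c$ and is a subword of $\R(\c)^{m+1}$, hence an $m$-factorization of $\c$. The color of each reflection is read off from which of the $m+1$ blocks (the leading $\c$, then the $m$ copies of $\sq{\wo}(\c)$) contains $f_j$, and the reflection order $\leqr$ is compatible with the left-to-right order of positions in the search word. I would then check that a single increasing flip changes precisely one colored reflection $r^{(i)}$ into $r^{(i+1)}$ while conjugating those lying strictly between them, which is exactly the definition of an increasing rotation.

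The main obstacle is this last verification, namely matching increasing flips with increasing rotations. One must show that the root function transforms correctly under a flip: the roots at unchanged positions are conjugated by precisely the reflection being rotated, and no other label moves. This relies on the standard but delicate lemma that the root configuration of a subword complex behaves predictably under flips, together with the fact that $\c\sq{\wo}^m(\c)$ is reduced enough (root-independent) for the root function always to output a genuine, non-degenerate $m$-factorization. Establishing well-definedness of the two maps and the exact flip-to-rotation correspondence is where essentially all of the combinatorial work concentrates; once these are in hand, composing the two isomorphisms yields the full statement.
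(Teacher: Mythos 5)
First, be aware that the paper offers no proof of \Cref{prop:m-cambrian-equiv} at all: the statement is imported from~\cite{stump_cataland_2018}, so your attempt is being measured against that (long) source rather than against anything in the present text. Your overall architecture does match theirs, and Pilaud--Stump's for $m=1$: an explicit bijection from sortable elements to facets, an explicit bijection from facets to $m$-factorizations via root functions, and a verification that weak-order covers, increasing flips and increasing rotations correspond. However, both of your explicit maps are wrong as written, and the verifications you dispatch with ``exactly as in the case $m=1$'' are precisely where the content of the theorem lies.

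Concretely: (1) your map $\Phi$ is dimensionally incoherent. Matching the $\c$-sorting word of $\bw$ greedily inside the search word and taking the complementary positions produces a set of size $n + m\ls{\wo} - \ls{\bw}$, which equals $n$ only for $\bw = \bwo^m$; worse, removing that set leaves a word spelling $\bw$, not $\wo^m$, so it is not a facet of $\subS{\QmCamb{m}{W,\c}, \wo^m, m\ls{\wo}}$. The correct bijection instead records the $n$ positions that the greedy matching \emph{skips} (one per simple reflection), and proving that this skip set is a facet, and that increasing flips correspond to covers among sortables, is genuine work: the ambient object is the interval $[\be, \bwo^m]$ of the Artin monoid $\Artinmon$, not the group $W$, so the $m=1$ arguments built on the longest element do not transfer by substituting $\wo \mapsto \wo^m$. (2) Your root function conjugates by the wrong word: the reflection attached to position $f_j$ of a facet $F$ must be the letter at $f_j$ conjugated by the product of the letters \emph{not} in $F$ lying strictly to its left (those spelling $\wo^m$), not by the letters of $F$ itself. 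With your definition, already the bottom facet of $\Camb{\mathfrak{S}_3, \s\t}$ (the leading copy of $\c$) yields the reflections $s$ and $sts$, whose product is $ts \neq st = c$, so the output is not an $m$-factorization of $\c$ at all; the correct definition yields $s^{(0)} t^{(0)}$, matching \Cref{fig:cambrian-example}. You rightly flag the flip-to-rotation correspondence as the main obstacle, but given these two mis-definitions and the unproven sortable-side claims, your proposal is an outline of the right strategy rather than a proof.
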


\section{A new definition of the \mhead-Cambrian lattice} \label{sec:new-def}

  The $m$-factorizations in $\mCambnc{m}{W, \c}$ are in bijection with $m$-multichains $w_1 \leqr \cdots \leqr w_m$ in $\NCL{W, c}$ that we call \mydef{$m$-noncrossing partitions}: we take $w_i$ as the product of the subword of colored reflections with color at least $m - i + 1$ in a given $m$-factorization.
  We thus propose the following new definition of the $m$-Cambrian lattice.
  Let $\mNC{m}{W, c}$ be the set of $m$-noncrossing partitions.
  For $w_{(m)}, w'_{(m)} \in \mNC{m}{W, c}$ with $w_{(m)} = w_1 \leqr \cdots \leqr w_m$ and $w'_{(m)} = w'_1 \leqr \cdots \leqr w'_m$, we set $w_{(m)} \mleq w'_{(m)}$ if and only if:
  \begin{enumerate}
  \item \label{item:vert-cond} \textbf{Vertical condition}: For all $1 \leq i \leq m$, we have $w_i \leq w'_i$ in $\Cambnc{W, c}$;
  \item \label{item:diag-cond} \textbf{Diagonal condition}: For all $1 \leq i < m$, we have $w_i \leqr w'_{i+1}$ in $\NCL{W, c}$.
  \end{enumerate}
  See \Cref{fig:scheme} for a scheme of the two conditions.
  We claim that \mbox{$(\mNC{m}{W, c}, \mleq)$} is a poset isomorphic to the $m$-Cambrian lattice.
  We note that it is not trivial that our rule gives a partial order, as transitivity cannot be easily proven by composing the conditions.

  \begin{example}\label{ex:m-cambrian_comparison}
    In $A_2$, with $\c = \s \t$ and $m=4$, for $w_a = \varepsilon \leqr s \leqr s \leqr s$, $w_b = sts \leqr sts \leqr st \leqr st$, and $w_c = sts \leqr sts \leqr sts \leqr st$, we have $w_a \mleq w_b$ but $w_a \not\mleq w_c$. 
    \begin{center} \unskip
      \begin{tikzpicture}[scale=.8] \footnotesize
        \node (W0) at (-1, 0) {$w_a$};
        \node (W1) at (0, 0) {$\varepsilon$};
        \node (W2) at (2, 0) {$s$};
        \node (W3) at (4, 0) {$s$};
        \node (W4) at (6, 0) {$s$};
        \draw[->] (W1) -- node [below] {$\leqr$} (W2);
        \draw[->] (W2) -- node [below] {$\leqr$} (W3);
        \draw[->] (W3) -- node [below] {$\leqr$} (W4);
        
        \node (Z0) at (-1, 2) {$w_b$};
        \node (Z1) at (0, 2) {$sts$};
        \node (Z2) at (2, 2) {$sts$};
        \node (Z3) at (4, 2) {$st$};
        \node (Z4) at (6, 2) {$st$};
        \draw[->] (Z1) -- node [above] {$\leqr$} (Z2);
        \draw[->] (Z2) -- node [above] {$\leqr$} (Z3);
        \draw[->] (Z3) -- node [above] {$\leqr$} (Z4);
        
        \draw[->] (W1) -- node [above, rotate=45] {$\leqr$} (Z2);
        \draw[->] (W2) -- node [above, rotate=45] {$\leqr$} (Z3);
        \draw[->] (W3) -- node [above, rotate=45] {$\leqr$} (Z4);
        
        \draw[->] (W1) -- node [above, rotate=90] {$\leq$} (Z1);
        \draw[->] (W2) -- node [above, rotate=90] {$\leq$} (Z2);
        \draw[->] (W3) -- node [above, rotate=90] {$\leq$} (Z3);
        \draw[->] (W4) -- node [above, rotate=90] {$\leq$} (Z4);

        \node[rotate=90] at (-1, 1) {$\mleq$};

        \node (Y0) at (8, 0) {$w_a$};
        \node (Y1) at (9, 0) {$\varepsilon$};
        \node (Y2) at (11, 0) {$s$};
        \node (Y3) at (13, 0) {$s$};
        \node (Y4) at (15, 0) {$s$};
        \draw[->] (Y1) -- node [below] {$\leqr$} (Y2);
        \draw[->] (Y2) -- node [below] {$\leqr$} (Y3);
        \draw[->] (Y3) -- node [below] {$\leqr$} (Y4);

        \node (X0) at (8, 2) {$w_c$};
        \node (X1) at (9, 2) {$sts$};
        \node (X2) at (11, 2) {$sts$};
        \node (X3) at (13, 2) {$sts$};
        \node (X4) at (15, 2) {$st$};
        \draw[->] (X1) -- node [above] {$\leqr$} (X2);
        \draw[->] (X2) -- node [above] {$\leqr$} (X3);
        \draw[->] (X3) -- node [above] {$\leqr$} (X4);

        \draw[->] (Y1) -- node [above, rotate=45] {$\leqr$} (X2);
        \draw[red,->,dashed] (Y2) -- node [above, rotate=45] {$\not\leqr$} (X3);
        \draw[->] (Y3) -- node [above, rotate=45] {$\leqr$} (X4);

        \draw[->] (Y1) -- node [above, rotate=90] {$\leq$} (X1);
        \draw[->] (Y2) -- node [above, rotate=90] {$\leq$} (X2);
        \draw[->] (Y3) -- node [above, rotate=90] {$\leq$} (X3);
        \draw[->] (Y4) -- node [above, rotate=90] {$\leq$} (X4);

        \node[red,rotate=90] at (8, 1) {$\not\mleq$};
      \end{tikzpicture}
    \end{center}
  \end{example}

  To show that our new definition is equivalent to the ones in~\cite{stump_cataland_2018}, we introduce the notion of a \mydef{$\c$-increasing chain} between two $m$-factorizations $\w, \w' \in \mCambnc{m}{W, \c}$, whose rotated reflections are increasing in the (extended) reflection order $\leq_\c$.
  
\section{Proof ideas} \label{sec:proof-idea}

  In this section, we sketch the main steps of the proofs of our results.
  The first step is to show the existence and uniqueness of a $\c$-increasing chain between two comparable elements.
  As a tool, we give a greedy algorithm to compute such a chain, and thus decide comparability in the $m$-Cambrian lattice.
  Finally, we show that our comparison rule of $m$-noncrossing partitions in~\Cref{sec:new-def} is equivalent to the existence of a $\c$-increasing chain.

\subsection{The greedy algorithm}

  Given a $\c$-increasing chain from $\w$ to $\w'$, we prove that its first rotated reflection must be the smallest element of $\w$ that is not in $\w'$.
  This observation leads to the following greedy algorithm to compute a $\c$-increasing chain from $\w$ to $\w'$.

  \begin{proposition}\label{prop:greedy_algorithm}
    Let $\w$ and $\w'$ be two $m$-factorizations of $\c$.
    We define $\w_0 = \w, \w_1, \ldots, \w_{(m+1)|\mathcal{R}|}$, where $\w_{j+1}$ is computed from $\w_j$, $\w'$, and the $j$-th letter $r_j$ of $\R(\c)^{m+1}$. We have
    \begin{itemize}
      \item $\w_{j+1} = \w_j$ if $r_j \notin \w_j \setminus \w'$ and $r_j \notin \w' \setminus \w_j$; 
      \item $\w_{j+1}$ is the increasing rotation of $\w_j$ at $r_j$ if $r_j \in \w_j \setminus \w'$ and $r_j$ is not of color $m$;
      \item Otherwise, $\w_{j+1}$ is undefined, and the computation aborts.
    \end{itemize}
    We get the $\c$-increasing chain from $\w$ to $\w'$, if it exists, and otherwise a certificate for non-existence.
  \end{proposition}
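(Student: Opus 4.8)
The plan is to first pin down the structure of a $\c$-increasing chain through a \emph{locality} principle for increasing rotations, and then to check that the greedy algorithm reproduces exactly the forced steps of such a chain. The single fact I would isolate at the outset is that an increasing rotation at a colored reflection $r^{(i)}$ only modifies the colored reflections lying in the window $[r^{(i)}, r^{(i+1)}]$ for $\leq_{\c}$: by definition the factors $\w'$ and $\w'''$ (the letters below $r^{(i)}$ and above $r^{(i+1)}$) are left untouched, and since the output is again an $m$-factorization, the conjugated block $\overline{\w''}$ together with $r^{(i+1)}$ must refill the same window. In particular, every colored reflection strictly below $r^{(i)}$ keeps its membership status, and the deleted letter $r^{(i)}$ is replaced by a letter strictly above it.

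Using locality I would prove the first-rotated-reflection lemma announced before the statement: if there is a $\c$-increasing chain from $\w$ to $\w'$ with rotated reflections $\rho_1 <_{\c} \cdots <_{\c} \rho_N$, then $\rho_1 = \min_{\leq_{\c}}(\w \setminus \w')$. Since the rotated reflections increase, every rotation acts on a window starting at some $\rho_k \geq_{\c} \rho_1$; hence the colored reflections strictly below $\rho_1$ are never touched and so agree on $\w$ and $\w'$, while $\rho_1 \in \w$ is deleted at the first step and can never reappear, as all later windows start strictly above it. Thus $\rho_1 \in \w \setminus \w'$ and nothing smaller lies in $\w \setminus \w'$. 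This determines the first step of \emph{any} $\c$-increasing chain, and applying the same statement to the sub-chain from the first rotation to $\w'$ shows inductively that the entire chain, and hence its uniqueness, is forced.

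Next I would match this forced behaviour to the algorithm, which processes the letters of $\R(\c)^{m+1}$ in $\leq_{\c}$-order. I would maintain the invariant that $\w_j$ agrees with $\w'$ on the first $j$ letters of $\R(\c)^{m+1}$; since by locality a rotation at $r_j$ only alters letters of index $\geq j$, treating $r_j$ preserves the invariant, and after all $(m+1)|\mathcal{R}|$ steps it yields $\w_{(m+1)|\mathcal{R}|} = \w'$, because an $m$-factorization is determined by its set of colored reflections. As the rotations are performed at strictly increasing indices, a non-aborting run outputs precisely a $\c$-increasing chain; conversely, when a chain exists the first-rotated-reflection lemma shows the run takes exactly its steps (doing nothing on letters where $\w_j$ already agrees with $\w'$, and rotating at the smallest differing letter). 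It then remains to read the two abort conditions as genuine certificates: if $r_j \in \w_j \setminus \w'$ has color $m$ it must leave $\w_j$ but cannot be rotated, and if $r_j \in \w' \setminus \w_j$ then by locality no later rotation, acting on windows starting strictly above $r_j$, can ever reinstate it; in both cases the invariant can no longer be completed, so no $\c$-increasing chain from $\w$ to $\w'$ exists.

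The main obstacle I anticipate is the locality principle together with the non-reappearance claim inside the first-rotated-reflection lemma: one must control precisely how the conjugated block $\overline{\w''}$ sits in the extended reflection order, and argue that deleting the $\leq_{\c}$-smallest differing colored reflection is irreversible for $\c$-increasing chains. Once locality is established cleanly, the uniqueness of the chain, the reconstruction by the greedy rotations, and the correctness of the two abort certificates all follow from the index bookkeeping above.
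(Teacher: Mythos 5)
Your overall architecture is the same as the paper's: both arguments rest on a locality property of increasing rotations, deduce from it that the first rotated reflection of a $\c$-increasing chain from $\w$ to $\w'$ is forced to be $\min_{\leq_\c}(\w \setminus \w')$, get uniqueness of the chain by induction, and then check that the greedy scan of $\R(\c)^{m+1}$ performs exactly these forced steps, so that an abort certifies non-existence. In the paper the locality property is packaged as \Cref{lem:root_conf_vect}, stated via the statistic $\dimr{\w}{r}$ and explicitly \emph{recalled} from earlier work rather than proven; the forced-first-rotation statement is \Cref{lem:smallest_flip}, which the paper proves for \emph{arbitrary} saturated chains (that extra generality is not needed for this proposition, but is what later feeds \Cref{thm:c_increasing_chain}); your restriction to $\c$-increasing chains is sufficient here, and your invariant-based matching of the algorithm to the forced chain is the same bookkeeping as the paper's \Cref{cor:unique_increasing_chain} and the surrounding discussion.

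The genuine gap is in your justification of the locality principle, which you correctly flag as the crux but do not actually establish. You argue that, since the output $\w' \cdot \overline{\w''} \cdot r^{(i+1)} \cdot \w'''$ of an increasing rotation is again an $m$-factorization, the conjugated block $\overline{\w''}$ ``must refill the same window $[r^{(i)}, r^{(i+1)}]$.'' But being an $m$-factorization (a subword of $\R(\c)^{m+1}$, read in increasing order) only forces the letters of $\overline{\w''}$ to lie strictly above the largest letter of the untouched prefix $\w'$ and strictly below $r^{(i+1)}$; nothing in that observation prevents a conjugated letter from landing strictly between $\max(\w')$ and $r^{(i)}$. If that could happen, a rotation could (re)create letters below previously rotated reflections, and then both your non-reappearance claim and your invariant (``$\w_j$ agrees with $\w'$ on the first $j$ letters'') collapse. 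Ruling this out is precisely the content of the first case of \Cref{lem:root_conf_vect}: $\dimr{\w'}{r} = \dimr{\w}{r} - 1$ when $r(\w,\w') = r$ is equivalent to saying that no conjugated letter falls weakly below the rotated reflection. So this fact must either be cited (it is part of the Stump--Thomas--Williams framework that the paper recalls) or proven by a genuine analysis of how conjugation by $r$ interacts with the reflection order $\leq_\c$; it cannot be extracted from the mere fact that the output is an $m$-factorization. With that single step repaired (by citation or proof), the rest of your argument goes through.
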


  As a technical tool for the proofs, for an $m$-factorization $\w$ and a colored reflection $r$, we define $\dimr{\w}{r}$ to be the number of colored reflections of $\w$ weakly smaller than $r$ in $\leqr$. 
  The following lemma recalls how $\dimr{\w}{r}$ is changed by an increasing rotation.
  
  \begin{lemma} \label{lem:root_conf_vect}
    Let $\w$ be an $m$-factorization of $\c$, and $r$ be a colored reflection.    
    If $\w'$ is an upper cover of $\w$, then $\dimr{\w'}{r} = \dimr{\w}{r} - 1$ if $r(\w, \w') = r$, and $\dimr{\w'}{r} = \dimr{\w}{r}$ if $r(\w, \w') >_\c r$.
  \end{lemma}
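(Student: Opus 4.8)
The plan is to read off the effect of a single increasing rotation on the set of colored reflections, and then simply count those weakly below $r$ in the (extended) reflection order $\leq_\c$. Write $\rho := r(\w,\w')$ for the rotated reflection, say $\rho = \rho_0^{(i)}$ with $0 \le i < m$. Since $\w'$ is an upper cover of $\w$ in $\mCambnc{m}{W,\c}$, it is exactly the increasing rotation of $\w$ at $\rho$, so I would split the colored reflections of $\w$ into three blocks according to $\leq_\c$: the prefix $A$ of letters strictly below $\rho_0^{(i)}$, the middle block $B$ of letters strictly between $\rho_0^{(i)}$ and $\rho_0^{(i+1)}$, and the suffix $C$ of letters weakly above $\rho_0^{(i+1)}$. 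Then $\w = A \sqcup \{\rho_0^{(i)}\} \sqcup B \sqcup C$ and $\w' = A \sqcup \overline{B} \sqcup \{\rho_0^{(i+1)}\} \sqcup C$, where $\overline{B}$ conjugates each letter of $B$ by $\rho_0$.

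The single structural input I need is that $A$ and $C$ are left untouched, and that $\overline{B}$ again consists of colored reflections lying strictly between $\rho_0^{(i)}$ and $\rho_0^{(i+1)}$ in $\leq_\c$; in particular every letter of $\overline{B}$ is strictly greater than $\rho$ in $\leq_\c$. This is precisely what makes the increasing rotation well defined and outputs a valid $m$-factorization, so I would invoke it as a known property from~\cite{stump_cataland_2018} rather than reprove it. With this in hand, the lemma reduces to comparing, letter by letter, which elements of $\w$ and of $\w'$ satisfy $\leq_\c r$.

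I would then argue by cases on the position of $r$ relative to $\rho$. If $r = \rho$, the prefix $A$ contributes the same letters $\leq_\c r$ to both words; the single letter $\rho = \rho_0^{(i)}$ is counted in $\dimr{\w}{r}$ but absent from $\w'$; and every remaining letter of $\w'$, namely those of $\overline{B}$, the new letter $\rho_0^{(i+1)}$, and those of $C$, is strictly greater than $\rho = r$ in $\leq_\c$ (using $i+1 > i$ for $\rho_0^{(i+1)}$, the suffix condition for $C$, and the structural input for $\overline{B}$). Hence exactly one counted letter is lost, giving $\dimr{\w'}{r} = \dimr{\w}{r} - 1$. If instead $\rho >_\c r$, then all letters touched by the rotation, that is $\rho_0^{(i)}$, $B$, $\overline{B}$, and $\rho_0^{(i+1)}$, lie strictly above $\rho >_\c r$, hence none is $\leq_\c r$; the letters $\leq_\c r$ therefore all belong to the unchanged prefix $A$ and coincide in $\w$ and $\w'$, so $\dimr{\w'}{r} = \dimr{\w}{r}$. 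The only genuine obstacle is the structural input of the second paragraph, that conjugation by $\rho_0$ keeps $B$ strictly between $\rho_0^{(i)}$ and $\rho_0^{(i+1)}$ in $\leq_\c$; everything else is bookkeeping in the total order $\leq_\c$.
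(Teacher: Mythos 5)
Your argument is sound, but note that the paper offers no proof of this lemma to compare against: it is introduced with ``the following lemma \emph{recalls} how $\dimr{\w}{r}$ is changed by an increasing rotation,'' i.e.\ it is treated as immediate from the way increasing rotations are defined in \cite{stump_cataland_2018}. Your write-up is exactly the argument the paper leaves implicit: your blocks $A$, $B$, $C$ are the paper's $\w'$, $\w''$, $\w'''$ in the definition of the rotation $\w' \cdot \r^{(i)} \cdot \w'' \cdot \w''' \mapsto \w' \cdot \overline{\w''} \cdot \r^{(i+1)} \cdot \w'''$, and, granting your ``structural input,'' both counting cases are routine and correctly executed. You also read $\dimr{\w}{r}$ correctly as counting letters weakly below $r$ in the extended order $\leq_\c$; the ``$\leqr$'' appearing in the paper's definition of $\dimr{\w}{r}$ is a typo, as its use in the proof of \Cref{lem:smallest_flip} confirms.

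The one point to tighten is your justification of the structural input. It is an overstatement that the containment of $\overline{B}$ in the open strip between $\rho_0^{(i)}$ and $\rho_0^{(i+1)}$ is ``precisely'' the well-definedness of the rotation. As formulated in this paper, well-definedness only asserts that $\w' \cdot \overline{\w''} \cdot \r^{(i+1)} \cdot \w'''$ is again an increasing subword of $\R(\c)^{m+1}$ with product $c$; that forces the letters of $\overline{B}$ to lie strictly above those of $A$ and strictly below $\rho_0^{(i+1)}$, but it does not by itself exclude a conjugated letter landing in a gap between $\max_{\leq_\c} A$ and $\rho_0^{(i)}$ --- and that is exactly the scenario that would break your Case 1 count (and Case 2 as well). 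What you actually need is that the rotation re-inserts each conjugated letter at its position \emph{inside} the strip; this is how the operation is set up in \cite{stump_cataland_2018}, and it is coherent because every reflection other than $\rho_0$ occurs exactly once as a colored reflection strictly between $\rho_0^{(i)}$ and $\rho_0^{(i+1)}$, while conjugation by $\rho_0$ permutes $\mathcal{R} \setminus \{\rho_0\}$. So you should invoke the strip-preservation statement itself rather than derive it from ``the output is some valid $m$-factorization''; with that citation made precise, your proof is complete and is, as far as one can tell, the intended (omitted) justification.
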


  The case $r(\w,\w') <_\c r$ is not tackled here as it is not needed in the proof of~\Cref{lem:smallest_flip}.

  \begin{lemma}\label{lem:smallest_flip}
    Let $\w = \w_0 \cov{} \w_1 \cov{} \cdots \cov{} \w_k = \w'$ be any saturated chain from $\w$ to $\w'$. Let $r$ be the smallest element of $\w \setminus \w'$ for $\leq_\c$. 
    The smallest rotation reflection for $\leq_\c$ in the chain is equal to $r$.
  \end{lemma}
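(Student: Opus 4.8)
The plan is to prove two inequalities in the reflection order $\leq_\c$: that $r$ is a lower bound for every rotation reflection occurring in the chain, and that $r$ itself occurs (weakly) among them, so that the overall minimum is exactly $r$. Throughout I write $r^*$ for the $\leq_\c$-smallest rotation reflection $r(\w_i,\w_{i+1})$ appearing along the chain (we may assume $k \geq 1$, so that $r = \min_{\leq_\c}(\w \setminus \w')$ exists), and I track the statistic $\dimr{\cdot}{\rho}$ for well-chosen reference reflections $\rho$, whose variation under a single cover is governed by \Cref{lem:root_conf_vect}.

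First I would show that every rotation reflection is $\geq_\c r$, that is, $r^* \geq_\c r$. Suppose for contradiction that $r^* <_\c r$. Since $r^*$ is the global minimum of the rotation reflections of the chain, no step rotates at a reflection $<_\c r^*$; hence by \Cref{lem:root_conf_vect} the statistic $\dimr{\cdot}{r^*}$ stays constant at every step rotating strictly above $r^*$ and drops by exactly one at every step rotating at $r^*$, and at least one such step exists. Therefore $\dimr{\w'}{r^*} < \dimr{\w}{r^*}$. On the other hand, every colored reflection $s \leq_\c r^*$ satisfies $s <_\c r$, so by minimality of $r$ in $\w \setminus \w'$ it cannot lie in $\w \setminus \w'$; thus $\{s \leq_\c r^* : s \in \w\} \subseteq \{s \leq_\c r^* : s \in \w'\}$, giving $\dimr{\w}{r^*} \leq \dimr{\w'}{r^*}$, a contradiction. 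I expect this step to carry the most weight: choosing $r^*$ as the reference is precisely what eliminates the missing third case ($r(\w,\w') <_\c \rho$) of \Cref{lem:root_conf_vect}, and the subset inclusion is exactly where the minimality hypothesis on $r$ is used.

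Next I would produce a rotation reflection that is $\leq_\c r$, which combined with the previous paragraph forces $r^* = r$. Since $r \in \w = \w_0$ while $r \notin \w' = \w_k$, there is a largest index $p$ with $r \in \w_p$, and necessarily $r \notin \w_{p+1}$; hence $r$ is one of the colored reflections removed at the step $\w_p \cov{} \w_{p+1}$. Writing $\rho = r(\w_p,\w_{p+1}) = t^{(i)}$, this increasing rotation replaces $t^{(i)}$ by $t^{(i+1)}$ and conjugates the colored reflections of $\w_p$ lying strictly between $t^{(i)}$ and $t^{(i+1)}$ in $\leq_\c$; thus the reflections it removes from $\w_p$ are $\rho$ together with these in-between letters, all of which are $\geq_\c \rho$. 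As $r$ is removed, $r \geq_\c \rho$, whence $r^* \leq_\c \rho \leq_\c r$. Together with $r^* \geq_\c r$ from the first part, this yields $r^* = r$, which is the claim.

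The only nonroutine inputs are therefore \Cref{lem:root_conf_vect} (used with the two reference reflections $r^*$ and $\rho$) and the explicit description of which colored reflections a single increasing rotation removes; both the lower and upper bounds then reduce to elementary counting against the minimality of $r$.
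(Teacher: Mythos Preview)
Your proof is correct and follows essentially the same strategy as the paper's: both halve the claim into $r^* \geq_\c r$ and $r^* \leq_\c r$, and your first half (ruling out $r^* <_\c r$ via \Cref{lem:root_conf_vect} with reference $r^*$ and the inclusion $\{s \leq_\c r^* : s \in \w\} \subseteq \{s \leq_\c r^* : s \in \w'\}$) is exactly the paper's argument, stated with the inequality that is actually needed. For the other half the paper again applies \Cref{lem:root_conf_vect}, now with reference $r$, to force $\dimr{\w'}{r} = \dimr{\w}{r}$ and derive a contradiction, whereas you instead locate a step at which $r$ is removed and use the explicit description of an increasing rotation to see that all removed letters are $\geq_\c \rho$; this is a minor but pleasant variant that avoids a second appeal to \Cref{lem:root_conf_vect}.
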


  \begin{proof}
    Let $r' = r(\w_j, \w_{j+1})$ be the smallest rotation reflection of the chain, which means $r(\w_{k}, \w_{k+1}) >_\c r'$ for $k \neq j$.
    We prove that $r' = r$ by contradiction.

    If $r' < r$, then by the definition of $r$, we have $\dimr{\w'}{r'} = \dimr{\w}{r'}$, as $\w'$ and $\w$ have the same reflections strictly smaller than $r$.
    By~\Cref{lem:root_conf_vect}, we have $\dimr{\w'}{r'} = \dimr{\w_k}{r'} \leq \cdots \dimr{\w_{j+1}}{r'} < \dimr{\w_j}{r'} \leq \cdots \leq \dimr{\w_0}{r'} = \dimr{\w}{r'}$, which is a contradiction. 

    If $r' > r$, by the definition of $r$, we have $\dimr{\w'}{r} < \dimr{\w}{r}$. 
    By~\Cref{lem:root_conf_vect}, for all $i$, as $r(\w_i, \w_{i+1}) > r$, we have $\dimr{\w_{i+1}}{r} = \dimr{\w_{i}}{r}$, hence $\dimr{\w'}{r} = \dimr{\w}{r}$, which is impossible.
  \end{proof}

  \begin{corollary}\label{cor:unique_increasing_chain}
    There exists at most one $\c$-increasing saturated chain between two $m$-factorizations.
  \end{corollary}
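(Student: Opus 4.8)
The plan is to argue by induction on the length $k$ of the chain, using \Cref{lem:smallest_flip} to pin down its first step. I first observe that all saturated chains between two fixed $m$-factorizations $\w$ and $\w'$ share the same length: each increasing rotation raises by exactly one the statistic obtained by summing the colors of all colored reflections of an $m$-factorization, so this common length is simply the difference of that statistic between $\w'$ and $\w$. Thus it suffices to show that two $\c$-increasing saturated chains of the same length $k$ from $\w$ to $\w'$ must coincide.

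The base case $k = 0$ forces $\w = \w'$, where the only chain is the empty one. For the inductive step, the key point is that a $\c$-increasing chain has its first step forced. Indeed, along a $\c$-increasing chain the rotated reflections strictly increase for $\leq_\c$, so the first rotated reflection $r(\w_0, \w_1)$ is the smallest rotation reflection of the whole chain. By \Cref{lem:smallest_flip}, that smallest rotation reflection equals $r$, the smallest element of $\w \setminus \w'$ for $\leq_\c$, a quantity depending only on $\w$ and $\w'$ and not on the chosen chain. Hence both chains must begin with the increasing rotation of $\w$ at $r$. Since the increasing rotation of an $m$-factorization at a prescribed colored reflection is a deterministic operation, both chains share the same term $\w_1$.

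To finish, I note that deleting this common first step yields, for each chain, a $\c$-increasing saturated chain from $\w_1$ to $\w'$ of length $k - 1$; the increasing property is preserved because we remove the smallest rotated reflection. The induction hypothesis then gives that these tails coincide, and therefore so do the two original chains. The only point to check along the way is that $r$ is actually rotatable, that is, of color strictly less than $m$; but this is automatic, since the existence of at least one $\c$-increasing chain, whose first and hence smallest rotation is at $r$, already witnesses that $r$ can be rotated. I do not expect a genuine obstacle here: essentially all the work has been front-loaded into \Cref{lem:smallest_flip}, and the corollary is the clean inductive packaging of the statement that the first flip is forced.
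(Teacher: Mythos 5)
Your core argument coincides with the paper's proof: in a $\c$-increasing chain the first rotated reflection is the smallest one of the chain, \Cref{lem:smallest_flip} identifies it with the smallest element of $\w \setminus \w'$ (a quantity depending only on the endpoints), so the first step is forced, and induction on chain length finishes. However, the preliminary observation you use to set up that induction is false. An increasing rotation does \emph{not} always raise the sum of colors by one, because the conjugation it performs can change the colors of the letters in between. The paper's own \Cref{fig:cambrian-example} ($W = \mathfrak{S}_3$, $\c = \s\t$, $m = 1$) gives a counterexample: the covering $2 \cov{} 3$ sends $\{\u^{(0)}, \s^{(1)}\}$ to $\{\t^{(0)}, \u^{(1)}\}$, where the rotated letter $\u$ gains a color but the conjugated letter $\s^{(1)}$ becomes $\t^{(0)}$ and loses one, so the color sum stays equal to $1$. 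Accordingly, saturated chains between two fixed $m$-factorizations need not have the same length: in that same figure, $1 \cov{} 2 \cov{} 3 \cov{} 5$ and $1 \cov{} 4 \cov{} 5$ are saturated chains with the same endpoints and lengths $3$ and $2$. This is also why \Cref{thm:c_increasing_chain} bothers to assert that the $\c$-increasing chain has \emph{maximal} length among all chains from $\w$ to $\w'$ --- a statement that would be vacuous if all lengths agreed.

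This false claim leaves a genuine gap as your proof is structured: you reduce the corollary to the statement that two $\c$-increasing saturated chains \emph{of the same length} coincide, so a pair of $\c$-increasing chains of different lengths is never ruled out by your induction. The repair is immediate and lands exactly on the paper's proof: drop the equal-length reduction and induct on the length of one of the two chains (or on the minimum of the two lengths). In the base case $\w = \w'$, both chains must be empty, since a nonempty saturated chain cannot return to its starting element in a poset; in the inductive step your argument applies verbatim, both chains starting with the rotation at $r$ and their tails coinciding by the induction hypothesis.
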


  \begin{proof}
    The smallest rotated reflection of a $\c$-increasing chain from $\w$ to $\w'$ is its first, which is the smallest one in $\w \setminus \w'$ by~\Cref{lem:smallest_flip}. 
    We conclude by induction on chain length.
  \end{proof}

  Previous discussion shows that if a $\c$-increasing chain from $\w$ to $\w'$ exists, then we can get it by the greedy algorithm of~\Cref{prop:greedy_algorithm}.
  It remains to show that such a chain always exists between two comparable elements.
  To prove it, we take any saturated chain from $\w$ to $\w'$ and make it $\c$-increasing by successive local changes. 
  The following key lemma, proven in~\Cref{ssec:local_reordering_lemma}, explains how to rectify reflection order locally in a chain.

  \begin{lemma}[\textbf{Local reordering lemma}]\label{lem:decreasing_two_chain}
    Let $\w_0 \cov{} \w_1 \cov{} \w_2$ in $\mCamb{m}{W,\c}$ with $r(\w_0, \w_1) = r >_\c r' = r(\w_1, \w_2)$.
    Then $r' \in \w_0$ and setting $\w'_1$ the upper cover of $\w_0$ such that $r(\w_0, \w'_1) = r'$, we have $\w'_1 \leq \w_2$.
    In particular, there is a saturated chain from $\w_0$ to $\w_2$ that starts with $\w_0 \cov{} \w'_1$.
  \end{lemma}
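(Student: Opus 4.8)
The plan is to argue entirely in the noncrossing model $\mCambnc{m}{W,\c}$, where a cover $\w \cov{} \w'$ is an increasing rotation and $r(\w,\w')$ denotes the (colored) rotated reflection. Write $r = r^{(i)}$ and $r' = (r')^{(j)}$ for the two rotated colored reflections, so the hypothesis reads $(r')^{(j)} <_\c r^{(i)}$ in the extended reflection order. I would first record the elementary fact that an increasing rotation at a colored reflection $\rho$ leaves every colored reflection strictly smaller than $\rho$ unchanged: such reflections lie in the prefix $\w'$ of the decomposition $\w = \w' \cdot \rho \cdot \w'' \cdot \w'''$ and are untouched. Since $(r')^{(j)} <_\c r^{(i)}$ and $r'$ occurs in $\w_1$, this forces $r' \in \w_0$ with the same color $j < m$; hence the increasing rotation of $\w_0$ at $r'$ is defined, and I set $\w'_1$ to be its result. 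This establishes the first assertion and the existence of $\w'_1$.

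It remains to prove $\w'_1 \leq \w_2$. I would aim directly at the stronger statement that the diamond closes, namely that $\w_2$ is a single increasing rotation of $\w'_1$; this gives both $\w'_1 \leq \w_2$ and the claimed saturated chain $\w_0 \cov{} \w'_1 \cov{} \w_2$ at once, without invoking gradedness. Concretely, $\w_2$ is obtained from $\w_0$ by rotating first at $r^{(i)}$ and then at $(r')^{(j)}$, whereas $\w'_1$ rotates at $(r')^{(j)}$ first; so the problem is a commutation of two increasing rotations whose rotated reflections are $\leq_\c$-comparable, which I would verify by comparing the resulting colored multisets region by region.

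The argument splits on whether the two rotation intervals overlap. Let $I_r$ (resp. $I_{r'}$) be the set of colored reflections affected by the rotation at $r^{(i)}$ (resp. $(r')^{(j)}$). If $(r')^{(j+1)} <_\c r^{(i)}$, the two rotations act on disjoint sets and conjugate by $r$ and by $r'$ independently, so the two execution orders manifestly yield the same $m$-factorization; here $\w_2$ is simply the rotation of $\w'_1$ at $r^{(i)}$. The main obstacle is the overlapping case $(r')^{(j)} <_\c r^{(i)} <_\c (r')^{(j+1)}$, in which $r^{(i)}$ lies inside the interval of $r'$: the first rotation now conjugates letters that the second rotation also moves, and conjugation by $r$ and by $r'$ need not commute.

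To handle the overlap I would realize each increasing rotation as its defining sequence of Hurwitz (braid) moves transporting $r^{(i)}$ rightward past the letters of its interval one at a time, conjugating them until it reaches color $i+1$. Commutation of the two rotations then becomes a confluence statement for two such transport sequences whose paths cross inside the overlap; I would prove it by induction on the number of colored reflections common to $I_r$ and $I_{r'}$, peeling off one letter at a time and invoking the elementary braid relations together with the reflection-order axioms that guarantee each conjugated letter stays a reflection and lands in its unique $\leq_\c$-sorted position, so the intermediate words remain valid reduced $m$-factorizations. The quantities $\dimr{\w}{\rho}$ and \Cref{lem:root_conf_vect} supply the bookkeeping that pins down these positions at each step. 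Matching the outputs of the two transport orders identifies $\w_2$ as a single increasing rotation of $\w'_1$, giving $\w'_1 \cov{} \w_2$. I expect this overlapping case — specifically the verification that $\leq_\c$-sortedness and reducedness are preserved throughout the crossing — to be the technical heart of the proof.
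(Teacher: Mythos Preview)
Your opening paragraph is fine and matches the paper: an increasing rotation at $r^{(i)}$ leaves every colored reflection strictly below $r^{(i)}$ untouched, so $r' \in \w_0$ and $\w'_1$ is well defined. The problem is the ``stronger statement'' you set out to prove in the second half, namely that the diamond always closes with $\w'_1 \cov{} \w_2$. This is simply false in the overlapping case. Already in $A_2$ with $\c = \s\t$ and $m=1$ (see \Cref{fig:cambrian-example}), take $\w_0$ the bottom element $s^{(0)}t^{(0)}$, $\w_1 = s^{(0)}t^{(1)}$, and $\w_2 = s^{(1)}t^{(1)}$ the top. Then $r = t^{(0)} >_\c s^{(0)} = r'$, and $\w'_1$ is the rotation of $\w_0$ at $s^{(0)}$, which is $u^{(0)}s^{(1)}$. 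But the unique upper cover of $\w'_1$ is $t^{(0)}u^{(1)}$, not $\w_2$; one needs a chain of length $2$ from $\w'_1$ to $\w_2$. So no Hurwitz confluence argument can identify $\w_2$ as a single rotation of $\w'_1$, and the lemma is deliberately stated only as $\w'_1 \leq \w_2$ (the phrase ``of length at least $2$'' in the proof of \Cref{thm:c_increasing_chain} reflects exactly this).

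The paper's proof takes an entirely different route: it passes to the subword complex model, observes that $\w_0$ and $\w_2$ share all but two positions, and uses the rank-reduction \Cref{prop:rank_reduction} to place the whole chain inside an $m$-Cambrian lattice for a dihedral group $I_2(k)$. Shift operators then normalize $r'$ to be $s^{(0)}$; if $r$ has positive color the two rotations commute (your non-overlapping case), and if $r$ also has color $0$ the interval $[\w_0,\w_2]$ lands in the $1$-Cambrian lattice of $I_2(k)$, where $\w_0$ and $\w_2$ are forced to be bottom and top and the inequality $\w'_1 \leq \w_2$ is checked directly. You should abandon the diamond-closing goal and either follow this reduction or aim only at $\w'_1 \leq \w_2$.
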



  \begin{proposition}\label{thm:c_increasing_chain}
    If $\w \leq \w'$ in $\mCamb{m}{W,\c}$, there exists a $\c$-increasing chain from $\w$ to $\w'$, and it is of maximal length among chains from $\w$ to $\w'$. 
  \end{proposition}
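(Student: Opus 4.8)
The plan is to manufacture the $\c$-increasing chain out of \emph{any} saturated chain by a bubble-sort procedure driven by the local reordering lemma \Cref{lem:decreasing_two_chain}, and then to read off maximality of its length from the fact that each sorting step can only lengthen the chain. Concretely, since $\w \leq \w'$ in the finite poset $\mCamb{m}{W,\c}$, there is at least one saturated chain $\w = \w_0 \cov{} \w_1 \cov{} \cdots \cov{} \w_k = \w'$. To such a chain I attach its reflection sequence $(r_1,\dots,r_k)$ with $r_i = r(\w_{i-1},\w_i)$, a word over the finite, totally ordered alphabet of colored reflections. Since $\leq_\c$ is total, the chain is $\c$-increasing exactly when this word has no descent, i.e. $r_1 <_\c \cdots <_\c r_k$.

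For existence, whenever a chain is not $\c$-increasing I locate the leftmost position $j$ with $r_j >_\c r_{j+1}$ and apply \Cref{lem:decreasing_two_chain} to the length-two subchain $\w_{j-1} \cov{} \w_j \cov{} \w_{j+1}$. It produces an upper cover $\w'_j$ of $\w_{j-1}$ with $r(\w_{j-1},\w'_j) = r_{j+1}$ together with a saturated chain from $\w'_j$ up to $\w_{j+1}$; splicing this in while keeping the tail $\w_{j+1} \cov{} \cdots \cov{} \w'$ unchanged yields a new saturated chain from $\w$ to $\w'$. Its reflection sequence keeps the prefix $(r_1,\dots,r_{j-1})$ and now carries $r_{j+1} <_\c r_j$ in position $j$, so it is strictly smaller in lexicographic order (the first difference, at position $j$, is a strict decrease). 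As the interval $[\w,\w']$ contains only finitely many saturated chains, such strictly lex-decreasing steps cannot continue forever, so the procedure terminates at a chain with no descent, which is by definition a $\c$-increasing chain from $\w$ to $\w'$.

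For maximality of the length, the key observation is that the splicing step never shortens the chain: \Cref{lem:decreasing_two_chain} replaces the length-two subchain $\w_{j-1} \cov{} \w_j \cov{} \w_{j+1}$ by $\w_{j-1} \cov{} \w'_j \leq \w_{j+1}$, and since $\w_{j+1}$ lies strictly above the cover $\w'_j$ of $\w_{j-1}$ (it cannot equal $\w'_j$, as $\w_{j-1} < \w_j < \w_{j+1}$ forbids $\w_{j+1}$ from covering $\w_{j-1}$), this is a saturated chain of length at least two. Hence every sorting step preserves the two endpoints and does not decrease the total length. Now start the procedure from a chain of maximal length $L^\ast$ among all chains from $\w$ to $\w'$; such a chain is saturated, since any non-cover step could be refined into a longer chain. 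The procedure outputs a $\c$-increasing chain of length $\geq L^\ast$, which, being itself a chain from $\w$ to $\w'$, also has length $\leq L^\ast$, whence exactly $L^\ast$. By \Cref{cor:unique_increasing_chain} this is the unique $\c$-increasing chain from $\w$ to $\w'$, so that chain is of maximal length.

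The genuinely hard content is entirely absorbed into the local reordering lemma \Cref{lem:decreasing_two_chain}, whose proof is deferred; granting it, the only subtleties that remain here are the two monovariant-type observations above, namely that fixing the \emph{leftmost} inversion strictly decreases the (bounded-length) reflection word in lexicographic order, which forces termination and hence existence, and that this same move never shortens the chain, which is precisely what upgrades termination into the maximal-length statement.
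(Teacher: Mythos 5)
Your proof is correct, and its core is the same as the paper's: both manufacture the $\c$-increasing chain by repeatedly rectifying a saturated chain with the local reordering lemma (\Cref{lem:decreasing_two_chain}), and both deduce maximality of its length from uniqueness (\Cref{cor:unique_increasing_chain}). The difference is in the bookkeeping, and yours is arguably cleaner. The paper applies the lemma at the descent sitting just before the \emph{globally smallest} remaining rotated reflection, and must then invoke \Cref{lem:smallest_flip} to argue that after the splice this smallest reflection has moved one step toward the sorted prefix (either $j-i$ drops or $i$ grows), which yields termination; you instead fix the \emph{leftmost} descent and observe that the reflection sequence strictly decreases in lexicographic order, so termination follows directly from finiteness of $\mCamb{m}{W,\c}$, with no appeal to \Cref{lem:smallest_flip} inside this proof. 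You also spell out two points the paper compresses into its last sentence: that the splice never shortens the chain (since $\w_{j+1}$ cannot cover $\w_{j-1}$, having $\w_j$ strictly between them, the replacement subchain has length at least two), and that maximality is obtained by running the procedure on a maximum-length chain --- which is automatically saturated --- and then identifying the output with the unique $\c$-increasing chain. Both observations are genuinely needed for the paper's terse conclusion, so making them explicit is an improvement in rigor rather than a detour.
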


  \begin{proof}
    Suppose $\w \leq \w'$ in the $m$-Cambrian lattice.
    Then, there exists a saturated chain $\w = \w_0 \cov{} \w_1 \cov{} \cdots \cov{} \w_k = \w'$ which may not necessarily be $\c$-increasing.

    Let $i$ be the largest index such that the $i$ reflection rotations of the subchain from $\w_0$ to $\w_i$ are the smallest for $\leq_\c$ of the whole chain and appear in order.
    Let $j > i$ be the index of the next smallest flip reflection $r(\w_j, \w_{j+1})$ of the subchain from $\w_i$ to $\w_k$.

    If $i = k$, the chain is $\c$-increasing and we are done. Otherwise, $\w_{j-1} \cov{} \w_j \cov{} \w_{j+1}$ respects the conditions of~\Cref{lem:decreasing_two_chain}. 
    There is thus a chain $\w_{j-1} \cov{} \w'_j \cov{} \cdots \cov{} \w_{j+1}$ of length at least $2$ where $r(\w_{j-1}, \w'_j) = r(\w_j, \w_{j+1})$. 
    Replacing $\w_{j-1} \cov{} \w_j \cov{} \w_{j+1}$ by this new subchain in the saturated chain from $\w_0$ to $\w_k$, by \Cref{lem:smallest_flip}, the smallest rotation reflection of the new subchain from $\w_{i}$ to $\w_k$ is $r(\w_{j-1}, \w'_j)$, so either $j-i$ has decreased by one, or $i$ has increased by at least one.
    Iterating the process, $i$ eventually grows.
    As chain length is bounded, we end up with a $\c$-increasing chain.
    By unicity of such a chain (see~\Cref{cor:unique_increasing_chain}), its length is maximal among chains from $\w$ to $\w'$.
  \end{proof}

  \begin{theorem}\label{thm:greedy_comparison}
    We have $\w \leq \w'$ in $\mCamb{m}{W,\c}$ if and only if there is a $\c$-increasing chain from $\w$ to $\w'$. Moreover, it can be decided by the greedy algorithm in~\Cref{prop:greedy_algorithm}.
  \end{theorem}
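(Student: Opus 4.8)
The plan is to treat this theorem as the synthesis of the three preceding results, \Cref{prop:greedy_algorithm}, \Cref{cor:unique_increasing_chain}, and \Cref{thm:c_increasing_chain}, since essentially all the substantive work has already been carried out. Concretely, I would first establish the logical equivalence $\w \leq \w' \iff$ ``a $\c$-increasing chain from $\w$ to $\w'$ exists'', and then read off the algorithmic ``moreover'' from the correctness of the greedy procedure. Throughout I work in the noncrossing model $\mCambnc{m}{W,\c}$, where increasing rotations are the covering relations, transporting everything to $\mCamb{m}{W,\c}$ via the isomorphisms of \Cref{prop:m-cambrian-equiv}.

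For the equivalence, the backward implication is immediate: by definition a $\c$-increasing chain from $\w$ to $\w'$ is a saturated chain each of whose steps is an increasing rotation, i.e.\ an upper cover in $\mCamb{m}{W,\c}$. Since the order is the transitive closure of increasing rotations, composing these covers directly yields $\w \leq \w'$. The forward implication is exactly \Cref{thm:c_increasing_chain}, which guarantees that comparability produces such a chain (and even one of maximal length). Thus the first half of the statement requires no new argument beyond citing these two facts.

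For the ``moreover'', I would argue that $\w \leq \w'$ holds if and only if the greedy algorithm of \Cref{prop:greedy_algorithm} runs to completion without aborting. By the equivalence just established, $\w \leq \w'$ is the same as the existence of a $\c$-increasing chain; by \Cref{cor:unique_increasing_chain} this chain, when it exists, is unique; and the forcing built into \Cref{lem:smallest_flip} — namely that the first rotated reflection of any saturated chain agreeing with a $\c$-increasing one must be the smallest element of $\w \setminus \w'$ for $\leq_\c$, applied inductively — shows that the greedy algorithm reconstructs exactly this unique chain step by step. Consequently, when the chain exists the algorithm outputs it, and when it does not the algorithm must abort.

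The only point that demands genuine care, and the one I expect to be the crux of the write-up, is verifying that an abort is a true certificate of non-existence rather than a mere failure of a heuristic. I would invoke the forcing once more: since any $\c$-increasing chain is obliged to coincide with the greedy run at each step, an abort means that the uniquely determined next action — either rotating a color-$m$ reflection of $\w_j \setminus \w'$, or encountering a reflection of $\w' \setminus \w_j$ that larger subsequent rotations can no longer create — is impossible, so no $\c$-increasing chain, and hence no comparability, can exist. This is precisely the content already folded into \Cref{prop:greedy_algorithm}, so in the final form the theorem reduces to a short assembly of the cited statements; the real difficulty lies upstream, in the local reordering lemma and the existence argument of \Cref{thm:c_increasing_chain}.
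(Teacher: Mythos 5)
Your proposal is correct and takes essentially the same route as the paper: the theorem is there, too, a synthesis of the trivial backward direction (a $\c$-increasing chain is a chain of covers), \Cref{thm:c_increasing_chain} for the forward direction, and the forcing/uniqueness of the $\c$-increasing chain (\Cref{lem:smallest_flip}, \Cref{cor:unique_increasing_chain}) to justify that the greedy algorithm of \Cref{prop:greedy_algorithm} reconstructs the unique chain when it exists and that an abort certifies non-comparability.
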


  \subsection{The local reordering lemma}\label{ssec:local_reordering_lemma}
  It remains to prove the local reordering lemma~(\Cref{lem:decreasing_two_chain}).
  The main idea is to reduce it to the case of a rank $2$ Coxeter group, then to a $1$-Cambrian lattice in this case, using the subword complex version of $m$-Cambrian lattices. 
  The main technical tools are the shift operator from~\cite{stump_cataland_2018}, and the restriction of $m$-Cambrian lattices to a parabolic subgroup. 

  \begin{proposition}[{\cite[Section~5.4.1]{stump_cataland_2018}}]\label{prop:shifted_cambrian}
    Let $s$ be the generator corresponding to the initial letter of a Coxeter word $\c$ in a Coxeter group $W$. 
    Let $\c' = \sinv \c \s$ be the Coxeter word of $W$ obtained by moving the first letter of $\c$ to the end. 
    Let $\s'$ be the generator obtained by conjugating $s$ by $\wo^m$. 
    Then the $\mathcal{S}$-word $\sinv \c \sq\wo^m(\c) \s'$ is commutation equivalent to the $m$-Cambrian search word $\QmCamb{m}{W, \c'}$. Hence, the flip poset of $\subS{\sinv \c \sq\wo^m(\c) \s', \wo^m, m \ls{\wo}}$ is isomorphic to $\mCambclus{m}{W,\c'}$.
  \end{proposition}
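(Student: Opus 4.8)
The plan is to establish the commutation equivalence of the two search words first, and then read off the flip-poset isomorphism as a formal consequence. Write $\c = \s\u$, so that $\c' = \sinv\c\,\s = \u\,\s$ and $\sinv\c$ denotes the positive word $\u$. Since $\bs$ left-divides $\bwo$, hence $\bwo^m$, the sorting recursion gives $\sq{\wo}^m(\c) = \s\,\sq{V}$, where $\sq{V}$ is the $\c'$-sorting word of the positive element $\bs^{-1}\bwo^m$ (one reads off the window $\c'^{\infty}$ of $\c^{\infty}$ obtained by deleting its leading $\s$). Substituting, the search word rewrites as $\sinv\c\,\sq{\wo}^m(\c)\,\s' = \u\,\s\,\sq{V}\,\s' = \c'\,\sq{V}\,\s'$, while $\QmCamb{m}{W,\c'} = \c'\,\sq{\wo}^m(\c')$. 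As commutation equivalence is preserved by prepending the common factor $\c'$, the whole statement reduces to the single rotation identity
\[ \sq{V}\,\s' \equiv \sq{\wo}^m(\c'). \]

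For this identity I would argue that $\s'$ is a \emph{commutation-final} letter of the $\c'$-sorting word $\sq{\wo}^m(\c')$ whose deletion recovers $\sq{V}$. At the level of elements everything matches: $\s'$ is the conjugate of $s$ by $\wo^m$, which is again a simple reflection because $\wo$ normalises $\mathcal{S}$ (conjugation by $\wo$ being the diagram automorphism $s\mapsto\wo s\wo$), and one checks $\bs^{-1}\bwo^m\cdot\bs' = \bwo^m$ in the Artin monoid, with lengths adding; so both sides of the displayed identity are reduced words for $\bwo^m$. The content is that they are joined using only commutations, with no braid move: concretely, that the last block of $\sq{\wo}^m(\c')$ ends, up to commuting letters, in $\s'$, and that erasing this occurrence leaves precisely the $\c'$-sorting word of $\bs^{-1}\bwo^m$. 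This is the step I expect to be the main obstacle, as it is exactly where the rigidity of commutation equivalence bites; I would establish it by induction on $m$, the base case $m=1$ being the classical single rotation of Coxeter-sorting words, and the inductive step tracking how conjugation by $\wo^m$ transports the trailing letter through the successive copies of $\R(\c')$ (its effect on $\mathcal{S}$ being trivial when $m$ is even and the diagram automorphism when $m$ is odd).

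Finally, the flip-poset isomorphism follows formally. A single commutation in a search word swaps two adjacent positions carrying commuting letters, and the induced relabelling of positions is an isomorphism of subword complexes that preserves which subwords spell $\wo^m$ and carries increasing flips to increasing flips; hence commutation-equivalent search words have isomorphic increasing-flip posets. Applying this to the equivalence $\sinv\c\,\sq{\wo}^m(\c)\,\s' \equiv \QmCamb{m}{W,\c'}$ established above, the flip poset of $\subS{\sinv\c\,\sq{\wo}^m(\c)\,\s',\wo^m,m\ls{\wo}}$ is isomorphic to the flip poset of $\subS{\QmCamb{m}{W,\c'},\wo^m,m\ls{\wo}}$, which is $\mCambclus{m}{W,\c'}$ by definition.
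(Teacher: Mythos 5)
There is no internal proof to compare against here: the paper imports this proposition verbatim from Stump--Thomas--Williams (Section~5.4.1) and never proves it. Judged on its own, your attempt is a correct chain of reductions wrapped around a hole at exactly the step that constitutes the content of the statement. The parts that do work: writing $\c = \s\u$, the recursion $\sq{\wo}^m(\c) = \s\,\sq{V}$ with $\sq{V}$ the $\c'$-sorting word of $\bs^{-1}\bwo^m$ is valid (since $\bs$ left-divides $\bwo^m$ and deleting the leading $\s$ from $\c^\infty$ leaves $\c'^\infty$); the element-level identity $\bs^{-1}\bwo^m \cdot \bs' = \bwo^m$ in $\Artinmon$ is correct, because conjugation by $\bwo$ in the Artin group realizes the diagram automorphism on the generators; and your final paragraph, that commutation-equivalent search words have isomorphic increasing-flip posets, is also sound (two swapped commuting positions can never themselves form a flip pair, by cancellativity, so the relabelling of positions preserves the direction of every flip).

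The gap is the displayed identity $\sq{V}\,\s' \equiv \sq{\wo}^m(\c')$, which \emph{is} the proposition. Knowing that both sides are positive words of length $m\ls{\wo}$ representing $\bwo^m$ only gives equality in $\Artinmon$, i.e.\ equivalence under arbitrary braid moves; upgrading this to commutation equivalence is the entire difficulty, and at precisely this point you switch to the conditional mood (``I would establish it by induction on $m$'') without giving the inductive step. Carrying that induction out is not routine: it needs structural facts about $\c$-sorting words of powers of $\bwo$ in the Artin monoid --- for instance that $\sq{\wo}^m(\c)$ factors as $\sq{\wo}(\c)\cdot\psi(\sq{\wo}(\c))\cdot\psi^{2}(\sq{\wo}(\c))\cdots$ for the diagram automorphism $\psi$, together with a compatible factorization of $\sq{V}$ --- and these facts are themselves the substance of Section~5.4 of Stump--Thomas--Williams, so the argument as written is circular-by-omission rather than self-contained. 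A further sign the step was not thought through: your sketch speaks of transporting the trailing letter ``through the successive copies of $\R(\c')$'', but the reflection word $\R(\c')$ is not the relevant object here; the induction has to track positions inside copies of $\sq{\wo}(\c')$ (equivalently, windows of $\c'^\infty$). So: correct reductions and correct formal bookkeeping, but the core assertion remains unproven.
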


  This gives a canonical bijection between the facets of $\mCambclus{m}{W,\c}$ and those of $\mCambclus{m}{W, \c'}$, which is called the \mydef{shift operator} over $s$. 
  It is \emph{not} a poset isomorphism, but we can split $\mCambclus{m}{W,\c}$ into two intervals such that it is in each interval.

  \begin{proposition}\label{prop:shift_first_letter_123}
    Let $s$ be the generator corresponding to the initial letter of a Coxeter word $\c$ in a Coxeter group $W$. Let $\Fwith{1}$ (resp.\ $\Fwithout{1}$) be the set of facets of $\mCambclus{m}{W, \c}$ in which the first letter of $\QmCamb{m}{W, \c}$ is present (resp. absent).
    The two subsets $\Fwith{1}$ and $\Fwithout{1}$ are two intervals of $\mCambclus{m}{W, \c}$, on which the restriction of the shift operator is an isomorphism of posets.
  \end{proposition}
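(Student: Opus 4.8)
The plan is to understand how the increasing flips of $\mCambclus{m}{W, \c}$ interact with the first position of the search word $\QmCamb{m}{W, \c}$, and then to transport this information through the shift operator. First I would record the decisive observation about flips. A facet is a set of removed positions, and an increasing flip replaces a removed position $i$ by a removed position $j$ with $i < j$. Such a flip can make the first letter become present in a facet only if $j = 1$, which is impossible since $j > i \geq 1$; it can make the first letter cease to be present only through a flip with $i = 1$. Hence no increasing flip leads from $\Fwithout{1}$ into $\Fwith{1}$, so $\Fwithout{1}$ is an up-set and $\Fwith{1}$ is a down-set of $\mCambclus{m}{W, \c}$.

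To upgrade these to intervals, I would realize each as the flip poset of a smaller subword complex. The facets in which the first letter is present are exactly the facets of the link of the vertex $1$, namely the subword complex on the search word with its first position deleted, searching for $\wo^m$ at length $m\ls{\wo}$; the facets in which it is absent are the facets of the deletion of the vertex $1$, the subword complex on the same shortened word, now searching for $s\wo^m$ at length $m\ls{\wo} - 1$. Both identifications merely shift the remaining positions down by one, hence are order preserving and send increasing flips to increasing flips; since $\Fwith{1}$ is a down-set, the order induced on it is the flip order of the link, and dually for $\Fwithout{1}$ and the deletion. I would then invoke the standard fact that the increasing-flip poset of any subword complex has a unique minimal and a unique maximal facet, its greedy leftmost and rightmost facets. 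Applied to the link this gives $\Fwith{1}$ a greatest element, and applied to the deletion it gives $\Fwithout{1}$ a least element. A down-set with a greatest element is a principal ideal $[\hat 0, b]$, and an up-set with a least element is a principal filter $[a, \hat 1]$, where $\hat 0, \hat 1$ are the greedy facets of the whole complex; this proves both sets are intervals.

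It then remains to check that the shift operator restricts to a poset isomorphism on each interval. Writing $\QmCamb{m}{W, \c} = q_1 \cdots q_p$ with $q_1 = \s$, the shift deletes position $1$, shifts positions $2, \dots, p$ down by one, and appends the conjugated letter $\s'$ as a new last position --- sending a facet with $\s$ present to one with $\s'$ present and a facet with $\s$ absent to one with $\s'$ absent --- and then applies the commutation equivalence of~\Cref{prop:shifted_cambrian}, which identifies the resulting flip poset with $\mCambclus{m}{W, \c'}$. A flip between two facets of the same type never involves position $1$, as both facets agree there; after the downward shift it becomes a flip between the corresponding positions with their relative order preserved, so increasing flips within $\Fwith{1}$ (resp.\ $\Fwithout{1}$) correspond in both directions to increasing flips in the image. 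By the analogue for the last position of the dichotomy of the first paragraph, the images are precisely the facets of $\mCambclus{m}{W, \c'}$ in which the last letter is present (resp.\ absent), which are themselves intervals by the same argument; hence the restriction is an isomorphism of posets between intervals.

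I expect the main obstacle to be the interval claim: being a down-set, or even a meet-subsemilattice, does not force a subset of a finite lattice to be an interval, so the real content lies in identifying the link and deletion of the vertex $1$ with honest (in general non-reduced) subword complexes and in the uniqueness of their greedy source and sink. Once this is secured, the first and third steps are essentially position bookkeeping, with~\Cref{prop:shifted_cambrian} absorbing all the subtlety of the commutation equivalence.
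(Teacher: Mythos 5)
Your opening and closing steps are sound: increasing flips can delete position $1$ from a facet but never add it, so $\Fwith{1}$ is a down-set and $\Fwithout{1}$ an up-set; the identification of $\Fwith{1}$ with the facets of the link $\subS{\s_2\cdots\s_p,\, \wo^m,\, m\ls{\wo}}$ and of $\Fwithout{1}$ with the facets of the deletion $\subS{\s_2\cdots\s_p,\, s\wo^m,\, m\ls{\wo}-1}$ is correct; the reduction of the interval claim to ``the link has a unique sink and the deletion has a unique source'' is valid (a down-set with a greatest element is a principal ideal, and dually); and the position bookkeeping for the shift operator matches what the paper does via \Cref{prop:shifted_cambrian}.

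The gap is the ``standard fact'' on which everything rests. It is \emph{false} that the increasing flip poset of an arbitrary subword complex, in the non-reduced generality $\subS{\Q, w, a}$ with $a > \ls{w}$ used in this paper, has a unique source and a unique sink: for two distinct generators $\s, \t$, the complex $\subS{\s\s\t\t,\, e,\, 2}$ has exactly two facets, $\{1,2\}$ and $\{3,4\}$, which are not adjacent, so its flip graph is disconnected, with two sources and two sinks. The greedy-facet theorem of Pilaud and Stump is a theorem about \emph{reduced} subword complexes ($a = \ls{w}$), so it covers your link and deletion only when $m = 1$. For $m \geq 2$ both complexes are genuinely non-reduced (the link searches for $\wo^m$ at length $m\ls{\wo} > \ls{\wo^m}$, the deletion for $s\wo^m$ at length $m\ls{\wo}-1$), and the uniqueness of their source and sink is precisely the hard content of the proposition, not a citable black box. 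This is exactly the hole the paper fills by leaving the subword-complex picture: it identifies $\Fwith{1}$ with the $\c$-sortable elements lying in the parabolic subgroup $W_{\langle s \rangle}$, so that the induced poset is $\mCambsort{m}{W_{\langle s \rangle}, \sinv\c}$, which has a maximum, and identifies $\Fwithout{1}$ with the $\c$-sortable elements of the weak-order interval $[\bs, \bwo^m]$ in the Artin monoid, which has a minimum. To repair your argument you would need to prove unique source/sink for these two specific non-reduced complexes, and the natural route is exactly this sortable-element identification (or, equivalently, the lattice property of $m$-Cambrian lattices from Stump--Thomas--Williams); once that is imported, your proof and the paper's essentially coincide.
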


  \begin{proof}
    The facets of $\Fwith{1}$ are $\c$-sortable elements with a $\c$-sorting word not starting by $\s$.
    By definition of $\c$-sortability, elements of $\Fwith{1}$ live in $W_{\langle s \rangle}$ and are $\sinv \c$-sortable.
    The restriction of $\mCamb{m}{W, \c}$ to $\Fwith{1}$ is isomorphic to $\mCamb{m}{W_{\langle s \rangle}, \sinv \c}$.

    The subset $\Fwithout{1}$ is formed by $\c$-sortable elements whose $\c$-sorting word starts with $\s$. 
    Hence, $\Fwithout{1}$ is the set of $\c$-sortable elements in $[\bs, \bwo^m]$, thus an interval in $\mCambsort{m}{W, \c}$.

    By~\Cref{prop:shifted_cambrian}, the shift operator is a bijection between facets of the two subword complexes.
    Within $\Fwith{1}$ and $\Fwithout{1}$, covering relations never involve the first letter, hence are preserved by the shift operator.
    On $\Fwith{1}$ and $\Fwithout{1}$, it is an isomorphism of posets.
  \end{proof}

  We can describe precisely the isomorphism between $\Fwith{1}$ and $\mCamb{m}{W_{\langle s \rangle}, \sinv \c}$.

  \begin{proposition}\label{prop:shift_first_letter_4}
    In the situation of \Cref{prop:shift_first_letter_123}, there is a subset $X$ of positions in the search word for $\mCambclus{m}{W, c}$ such that facets of $\Fwith{1}$ are included in $\{1\} \cup X$ and vice-versa.
    In terms of $m$-reflections, the isomorphism between $\Fwith{1}$ and $\mCamb{m}{W_{\langle s \rangle}, \sinv \c}$ is given by removing the reflection $s^{(0)}$ from the facets.
  \end{proposition}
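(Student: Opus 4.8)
The plan is to prove the two assertions separately, taking \Cref{prop:shift_first_letter_123} as the starting point: recall that $\Fwith{1}$ is exactly the set of facets of $\mCambclus{m}{W, \c}$ containing position $1$ of the search word $\QmCamb{m}{W, \c} = \c\,\sq\wo^m(\c)$, and that the shift operator restricts to a poset isomorphism $\Fwith{1} \simeq \mCamb{m}{W_{\langle s \rangle}, \sinv\c}$. Throughout I write $c$ for the Coxeter element of $W$ and note that the Coxeter element of $W_{\langle s \rangle}$ attached to the word $\sinv\c$ is $sc$, so that $\mathcal{R}_{\langle s \rangle} := \mathcal{R} \cap W_{\langle s \rangle}$ is precisely the set of reflections weakly below $sc$ in the absolute order.

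For the first assertion I would let $X$ be the support of $\Fwith{1}$, i.e.\ the set of positions other than $1$ occurring in at least one facet of $\Fwith{1}$; then the inclusion of $\Fwith{1}$ into the faces contained in $\{1\}\cup X$ is immediate, and all the content lies in the converse. Since $\Fwith{1}$ is the set of facets through position $1$, any facet contained in $\{1\}\cup X$ that contains $1$ already belongs to $\Fwith{1}$, so it suffices to rule out a facet $J \subseteq X$ avoiding position $1$. Deleting position $1$ identifies $\Fwith{1}$ with the facets of $\subS{\sinv\c\,\sq\wo^m(\c), \wo^m, m\ls{\wo}}$, whose forced (never-deleted) positions form the complement $Y$ of $X$ among the remaining positions. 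The key step is to show that every facet in $\Fwithout{1}$ uses at least one position of $Y$: keeping the initial letter $\s$ forces the deletion of one of the positions spelling the ``parabolic skeleton'', which is exactly what being supported on $X$ forbids. This is where the restriction-to-parabolic machinery enters, certifying that the positions of $Y$ encode the part of $\wo^m$ responsible for leaving $W_{\langle s \rangle}$.

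For the second assertion I would pass to the noncrossing model via \Cref{prop:m-cambrian-equiv}, viewing a facet as an $m$-factorization, equivalently as an $m$-multichain $w_1 \leqr \cdots \leqr w_m$ in $\NCL{W, c}$. Under the explicit isomorphism, position $1$ of the search word corresponds to the smallest colored reflection $s^{(0)}$, where $s = t_1$ is the first reflection of $\R(\c)$; hence $\Fwith{1}$ is the set of $m$-factorizations containing $s^{(0)}$. Because factorizations are increasing and $s^{(0)}$ is the smallest colored reflection, such a factorization reads $\w = s^{(0)} \cdot \w'$ with $\w'$ a reduced increasing $\mathcal{R}$-word for $sc \in W_{\langle s \rangle}$, whose reflections therefore all lie in $\mathcal{R}_{\langle s \rangle}$; conversely, since $c \notin W_{\langle s \rangle}$ while every parabolic reflection lies in $W_{\langle s \rangle}$, any factorization using only parabolic reflections and $s^{(0)}$ must actually contain $s^{(0)}$. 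Deleting $s^{(0)}$ yields an $m$-factorization of $sc$ in $W_{\langle s \rangle}$, and in multichain terms leaves $w_1 \leqr \cdots \leqr w_m$ unchanged --- as $s^{(0)}$ has color $0$ it contributes to no $w_i$ --- now read inside $\NCL{W_{\langle s \rangle}, sc}$. Finally, within the interval $\Fwith{1}$ the reflection $s^{(0)}$ is never rotated (a rotation at $s^{(0)}$ would leave $\Fwith{1}$), so increasing rotations correspond bijectively to those of the parabolic; hence the map is an order isomorphism and coincides with the shift operator, which is the claimed description.

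The main obstacle I anticipate is the bridge between the two models in the first assertion. The search word has $\lvert\mathcal{S}\rvert + m\lvert\mathcal{R}\rvert$ positions whereas there are $(m+1)\lvert\mathcal{R}\rvert$ colored reflections, so there is no ground-set bijection between positions and colored reflections, and the clean noncrossing statement ``$\w$ contains only $s^{(0)}$ and parabolic reflections'' cannot be transported verbatim to a statement about positions. Making the identification of $X$ precise --- in particular proving that every facet of $\Fwithout{1}$ meets $Y$ --- requires the restriction of the subword complex to $W_{\langle s \rangle}$ together with the compatibility of the reflection order $\leq_\c$ and of the coloring with their parabolic counterparts for $\sinv\c$, which is the technical heart of the argument.
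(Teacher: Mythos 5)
Your handling of the first assertion is where the genuine gap lies. You define $X$ as the support of $\Fwith{1}$, which makes the forward inclusion a tautology, and you correctly isolate the remaining content as the claim that no facet of $\Fwithout{1}$ is contained in $X$ --- but you never prove this claim: you only gesture at it (``keeping the initial letter $\s$ forces the deletion of one of the positions spelling the parabolic skeleton'') and defer it to unspecified restriction-to-parabolic machinery, conceding yourself that it is ``the technical heart of the argument''. Moreover, the identification you do provide --- deleting position $1$ identifies $\Fwith{1}$ with the facets of $\subS{\sinv \c \, \sq\wo^m(\c), \wo^m, m \ls{\wo}}$ --- is only the link of vertex $1$, whose search word is still a word in $W$; it is \emph{not} the parabolic search word $\QmCamb{m}{W_{\langle s \rangle}, \sinv\c}$, which is a word over a different group with a different longest element. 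The bridge between these two, at the level of individual positions, is precisely what the proposition asserts and what \Cref{prop:rank_reduction} later consumes, and it is never built in your proposal. So what you have is a correct reduction plus an unproved key step, not a proof.

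The paper sidesteps your hard step entirely by constructing the letter-level identification directly: both $\mCambclus{m}{W, \c}$ and the parabolic complex have minimal facet given by the initial copy of $\c$ (resp.\ of $\sinv\c$), so one identifies the letters of $\c$ other than $\s$ with those of the initial copy of $\sinv\c$, and then propagates this identification along increasing flips, using the simplicial isomorphism coming from \Cref{prop:shift_first_letter_123} and the connectivity of the flip graph; $X$ is \emph{defined} as the image of this identification, so that facets of $\Fwith{1}$ lie in $\{1\} \cup X$ by construction and the reverse direction comes for free from the isomorphism. Your second paragraph, on the $m$-reflection description, is essentially the paper's argument (adjoining or removing $s^{(0)}$, which is never rotated inside $\Fwith{1}$) and is fine in outline, but it too leans on unproved compatibilities --- that position $1$ corresponds to $s^{(0)}$ under the explicit isomorphism, and that the restriction of $\leq_\c$ and of the coloring to reflections of $W_{\langle s \rangle}$ agrees with their parabolic counterparts for $\sinv\c$ --- which you flag but do not establish.
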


  Positions in $X$ are said to be \mydef{compatible} with the first letter of the search word.

  \begin{proof}
    Let $\Q = \QmCamb{m}{W, \c}$ and $\Q' = \QmCamb{m}{W_{\langle s \rangle}, \sinv \c}$.
    We identify letters of $\Q'$ with a subset $X$ of positions in $\Q$.
    The smallest facet in $\mCambclus{m}{W, \c}$ is the initial copy of $\c$. 
    We identify letters of $\c$ not equal to $\s$ with the corresponding letter in the initial copy of $\sinv \c$ in $\QmCamb{m}{W_{\langle s \rangle}, \sinv \c}$.
    Since the two simplicial sets are isomorphic, we can identify increasing flips, which allows us to propagate the identification to all letters of $\Q'$.
    All facets of $\Fwith{1}$ live within $\{1\} \cup X$ by construction. 
    The identification of simplicial complexes gives the reverse direction.
    Finally, any $m$-factorization of $\sinv \c$ can be completed into one for $\c$ by adding $s^{(0)}$, which means we can turn a facet for $\Q'$ into one for $\Q$.
  \end{proof}

  The shift operator allows us to move the first letter of the subword complex of the $m$-Cambrian lattice to the end.
  Combining it with~\Cref{prop:shift_first_letter_4}, we can fix any subset $E$ of compatible positions in the search word, and reduce the rank by the size of $E$.
  This is formalized in the next proposition, which generalizes~\cite[Proposition~3.6]{pilaud_brick_2015}.

  \begin{proposition}\label{prop:rank_reduction}
    Let $W$ be a Coxeter group of rank $n$, $I$ a facet of $\mCambclus{m}{W, \c}$, and $E$ a subset of $I$. 
    The set $\Fwith{E}$ of facets of $\mCambclus{m}{W, \c}$ containing $E$ forms an interval isomorphic to an $m$-Cambrian lattice of the parabolic subgroup $W'$ of rank $n-\lvert E \rvert$ generated by $I \setminus E$.
  \end{proposition}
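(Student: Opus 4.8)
The plan is to argue by induction on $\lvert E\rvert$, peeling off one position of $E$ at a time and dropping the rank by one at each step through the single-letter machinery of \Cref{prop:shift_first_letter_123} and \Cref{prop:shift_first_letter_4}. The base case $\lvert E\rvert=0$ is immediate, since $\Fwith{\emptyset}$ is all of $\mCambclus{m}{W,\c}$, the $m$-Cambrian lattice of $W=W'$ itself. The clean inductive case is when the initial position of the search word $\Q=\QmCamb{m}{W,\c}$ lies in $E$: then $\Fwith{E}\subseteq\Fwith{1}$, and by \Cref{prop:shift_first_letter_123} together with \Cref{prop:shift_first_letter_4}, $\Fwith{1}$ is an interval carrying a poset isomorphism onto $\mCamb{m}{W_{\langle s \rangle},\sinv\c}$ obtained by deleting the reflection $s^{(0)}$. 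Since $I\subseteq\{1\}\cup X$, the remaining positions $E\setminus\{1\}$ are compatible, so $\Fwith{E}$ is carried to $\Fwith{E\setminus\{1\}}$ inside the rank-$(n-1)$ lattice $\mCamb{m}{W_{\langle s \rangle},\sinv\c}$. The inductive hypothesis, applied to the facet $I\setminus\{1\}$ and the subset $E\setminus\{1\}$, then identifies this with the $m$-Cambrian lattice of the parabolic generated by $(I\setminus\{1\})\setminus(E\setminus\{1\})=I\setminus E$; as a parabolic subgroup of a parabolic subgroup is again parabolic, this is the desired $W'$, and the rank and generator bookkeeping is routine.

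To reduce an arbitrary $E$ to this situation, I would bring a chosen position $p\in E$ to the front of the search word by repeatedly applying the shift operator (\Cref{prop:shifted_cambrian}), which cyclically moves the first letter to the end and relabels the positions accordingly, and then apply the previous paragraph.

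The main obstacle, and the only genuinely delicate point, is that the shift operator is a poset isomorphism only on each of the two pieces $\Fwith{1}$ and $\Fwithout{1}$ separately, and \emph{not} on $\mCambclus{m}{W,\c}$ as a whole; hence, while $p$ is being dragged to the front through positions lying outside $E$, the order induced on $\Fwith{E}$ can be scrambled, so the conclusion cannot be transported naively. I would circumvent this by decoupling the abstract order type of $\Fwith{E}$ from the way it is embedded in $\mCambclus{m}{W,\c}$. On one hand, the increasing flips internal to $\Fwith{E}$ are exactly the increasing flips of the link of the face $E$, i.e.\ of the subword complex $\subS{\Q\setminus E,\wo^m,m\ls{\wo}}$; this intrinsic flip poset is insensitive to the ambient embedding, and I would identify it with $\mCamb{m}{W',\c'}$ by iterating the single-letter reduction, rotating an $E$-position to the front at each stage. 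Because at this stage I am only comparing intrinsic flip posets of subword complexes, where solely the simplicial structure and the orientation of flips matter, both governed by \Cref{prop:shifted_cambrian} and \Cref{prop:shift_first_letter_4}, the failure of the shift to be a global poset isomorphism is immaterial. On the other hand, I would show that $\Fwith{E}$ is order-convex in $\mCambclus{m}{W,\c}$, i.e.\ an interval: for $a\le c\le b$ with $E\subseteq a$ and $E\subseteq b$, every cover in a saturated chain between two facets of $\Fwith{E}$ that both contain $E$ is an increasing flip not moving an $E$-position, so it suffices to control how $E$-positions can leave and re-enter going upwards. Order-convexity then forces the order induced on $\Fwith{E}$ to coincide with its internal flip order, and combining the two halves yields the claimed isomorphism $\Fwith{E}\simeq\mCamb{m}{W',\c'}$ of intervals.

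I expect the order-convexity step and the accompanying orientation bookkeeping to be where the real work lies. The favourable feature is that the smallest position can only be flipped out and never flipped back in while moving upward, which is precisely what makes $\Fwith{1}$ a genuine interval in \Cref{prop:shift_first_letter_123}; the difficulty is to propagate this convexity to $\Fwith{E}$ for a general compatible set $E$ across the successive shifts, and to verify that each intermediate relabeled search word is genuinely an $m$-Cambrian search word of the corresponding parabolic. Once convexity and the intrinsic link identification are in hand, the identification of the parabolic generated by $I\setminus E$ and the rank count $n-\lvert E\rvert$ follow formally.
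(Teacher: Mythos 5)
Your induction skeleton (reduce to a single position, bring it to the front with the shift operator, then apply \Cref{prop:shift_first_letter_123} and \Cref{prop:shift_first_letter_4}) is the same as the paper's, and you correctly isolate the central obstacle: the shift operator is not a global poset isomorphism. But the device you propose to get around it rests on a false claim. You assert that the intrinsic flip poset of the link of $E$, i.e.\ of $\subS{\Q\setminus E, \wo^m, m\ls{\wo}}$, is ``insensitive to the ambient embedding'', so that the relabelings caused by shifting become immaterial once one only compares intrinsic flip posets. This is not so: the orientation of a flip is defined by the linear order of positions in the search word, and a cyclic shift reverses the orientation of every flip involving the shifted position. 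Flips internal to $\Fwith{E}$ involve exactly positions outside $E$, and these are precisely the letters you must drag past the front to bring an $E$-position there; whenever such a letter is \emph{compatible} with $E$ (it occurs in some facet containing $E$), the link contains a flip at that position, and that flip changes orientation under the shift. So the link flip poset genuinely changes at each shift. What survives is only that it remains an $m$-Cambrian lattice of the parabolic subgroup for the \emph{rotated} Coxeter word (via \Cref{prop:shifted_cambrian}), possibly after conjugating the parabolic subgroup when the shifted letter is incompatible. This is exactly the bookkeeping the paper's proof performs in its shift-back phase ($k\in X$: shift the parabolic search word $\Q''$ in parallel; $k\notin X$: keep $\Q''$ but conjugate the subgroup by $s_k$), and it cannot be bypassed by declaring the link poset shift-invariant.

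The second pillar of your plan, order-convexity of $\Fwith{E}$ in $\mCambclus{m}{W,\c}$, is precisely where you admit the real work lies, and it is left unproven. Note that the paper never proves convexity head-on: the interval property is free when the distinguished position is first (there $\Fwith{1}$ is an interval by \Cref{prop:shift_first_letter_123}), and it is then transported back through the inverse shifts as part of a maintained invariant, using that in the incompatible case $\Fwith{E}$ lies entirely inside one of the two pieces of \Cref{prop:shift_first_letter_123} (on which the shift is a poset isomorphism), while in the compatible case the splitting of $\Fwith{E}$ into the two pieces matches, under the identification of \Cref{prop:shift_first_letter_4}, the corresponding splitting of the parabolic complex. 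Without either this transport argument or an actual proof of convexity, your proposal establishes at most the simplicial identification of $\Fwith{E}$ with a parabolic subword complex, not the statement about the induced order, which is the substance of the proposition.
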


  \begin{proof}
    We proceed by induction on $|E|$.
    It suffices to prove it for $\lvert E \rvert = 1$.
    Let $\Q = \s_1 \ldots \s_p$ be the $m$-Cambrian search word $\QmCamb{m}{W, \c}$.
    The case $E = \{ 1 \}$ is done in~\Cref{prop:shift_first_letter_4}.

    Let $E = \{ i+1 \}$ for some $i > 0$. 
    Let $\Q' = \s_{i+1} \ldots \s_p \s'_1 \ldots \s'_{i}$ be the search word obtained by applying the shift operator $i$ times.
    By~\Cref{prop:shifted_cambrian}, $\Q' \equiv \QmCamb{m}{W, \c'}$ for some Coxeter word $\c'$.
    Now $\s_{i+1}$ is the initial letter of $\Q'$, and $\Fwith{E}$ is an interval isomorphic to $\mCamb{m}{W_{\langle \s_{i+1} \rangle}, \overline{\s_{i+1}}\c'}$, of rank $n-1$.

    To complete the proof, we need to shift back the $i$ last letters of $\Q'$ to the beginning.
    By~\Cref{prop:shift_first_letter_4}, letters of $\Q'' = \QmCamb{m}{W_{\langle \s_{i+1} \rangle}, \overline{\s_{i+1}}\c'}$ are identified with a subset $X$ of positions in $\Q'$ compatible with $\s_{i+1}$.
    When shifting back a letter $\s_k$, if $k \in X$, we shift back the corresponding letter in $\Q''$; if $k \not \in X$, the parabolic subword complex remains the same but the parabolic subgroup is conjugated by $s_k$.
    We have finally that $\Fwith{E}$ is an interval in $\mCamb{m}{W, \c}$ isomorphic to an $m$-Cambrian lattice in a conjugate of $W_{\langle \s_{i+1} \rangle}$.
  \end{proof}
   
  We can now prove the local reordering lemma.

  \begin{proof}[Proof of the local reordering lemma~(\Cref{lem:decreasing_two_chain}).]
    Let $\w_0 \cov{} \w_1 \cov{} \w_2$ in $\mCamb{m}{W,\c}$ be such that $r(\w_0, \w_1) = r >_\c r' = r(\w_1, \w_2)$.
    We prove that $r' \in \w_0$, and that $\w'_1 \leq \w_2$ where $\w'_1$ is the upper cover of $\w_0$ such that $r(\w_0, \w'_1) = r'$.

    First, the rotation from $\w_0$ to $\w_1$ does not modify the reflections smaller than $r$.
    Thus, $r'$ is already in $\w_0$, meaning that $\w'_1$ is well-defined, and $\w'_1 \neq \w_2$. 
    As $\w_2$ is obtained from $\w_0$ by two successive rotations, the two corresponding facets of the subword complex contain at least $n-2$ common positions. 
    By~\Cref{prop:rank_reduction}, $\w_0$ and $\w_2$ live in an $m$-Cambrian lattice of rank $2$, say $\mCamb{m}{I_2(k), \s \t}$ for some $k$ and some Coxeter word $\s\t$.
    
    Without loss of generality, thanks to the shift operators, we may assume that $r'$ is the smallest reflection $s^{(0)}$ in the reflection order associated to $\s\t$.
    If $r$ is not of color $0$, then the two increasing rotations commute, and $\w'_1 \leq \w_2$.
    Otherwise, $[\w_0, \w_2]$ is isomorphic to an interval in the $1$-Cambrian lattice $\Camb{I_2(k), \s\t}$, and the result is easily checked since $\w_0$ and $\w_2$ must be the bottom and top elements of the poset. Hence, $\w'_1 \leq \w_2$.
  \end{proof}

  \subsection{The comparison criterion}
  The greedy algorithm in~\Cref{prop:greedy_algorithm} provides a way to decide comparability in the $m$-Cambrian lattice. 
  A reinterpretation of the algorithm leads to a criterion for comparability of two $m$-noncrossing partitions.
  The idea is that, instead of processing letters of $\R(\c)^{m+1}$ in one go, we take a ``snapshot'' of the state of the algorithm after each copy of $\R(\c)$.

  Recall that there is a natural bijection between $m$-factorizations of $\c$ and $m$-noncrossing partitions. 
  For $m = 1$, we get a description of the Cambrian lattice on the set of non\-crossing partitions $\NC{W, c}$, with the order relation denoted by $\leq$. 
  The noncrossing partition lattice is another partial order on $\NC{W, c}$, with the order relation denoted by $\leqr$.
  We can in fact define the $m$-Cambrian lattice using $\leq$ and $\leqr$ on $\NC{W, c}$, as described in \Cref{sec:new-def}. This criterion is illustrated in~\Cref{fig:scheme}.

  \begin{proposition}\label{prop:embedding_projecting_cambrian}
    For any $m' < m$, the $m'$-Cambrian lattice $\mCamb{m'}{W, \c}$ embeds in $\mCamb{m}{W, \c}$ as the interval of $m$-factorizations using only $m'+1$ consecutive colors.

    Conversely, the $m$-Cambrian lattice $\mCamb{m}{W, \c}$ projects onto the $m'$-Cambrian lattice $\mCamb{m'}{W, \c}$ by keeping only $m'$ consecutive components of the $m$-factorizations.
  \end{proposition}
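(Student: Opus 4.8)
The plan is to work entirely in the noncrossing model $\mCambnc{m}{W,\c}$ and to exploit the bijection from \Cref{sec:new-def} between $m$-factorizations and their multichains of partial products. For an $m$-factorization $\w$ write $u_k(\w)$ for the product of the colored reflections of $\w$ of color at least $k$, so that $e = u_{m+1} \leqr \cdots \leqr u_1 \leqr u_0 = c$ in $\NCL{W,c}$ is precisely the attached $m$-noncrossing partition, read as cut points. The technical cornerstone I would establish first is a local lemma: an increasing rotation of $\w$ at a reflection $r$ of color $a$ fixes every cut point except $u_{a+1}$, which becomes $r\,u_{a+1}$, a cover in the absolute order. This follows by a direct computation from the definition of increasing rotation: writing $\w = \w' \cdot r^{(a)} \cdot \w'' \cdot \w'''$ with $\w''$ the reflections strictly between $r^{(a)}$ and $r^{(a+1)}$ in $\leq_\c$, the conjugation of $\w''$ by $r$ is engineered exactly so that the product over colors $\geq k$ is preserved for all $k \neq a+1$ while the product over colors $\geq a+1$ is left-multiplied by $r$. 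Since an $m$-factorization is determined by its cut points, this pins the covering relation of $\mCamb{m}{W,\c}$ down to a one-coordinate move on the multichain side.

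Two consequences drive the rest. First, a cover moves a single reflection from color $a$ to color $a+1$, so along any upward chain the smallest and largest colors used are non-decreasing; hence for fixed $p$ the set $S$ of $m$-factorizations using only the colors $\{p,\dots,p+m'\}$ is an intersection of an up-set and a down-set, i.e.\ order-convex. Second, because the extended reflection order compares first by color and then by position in $\R(\c)$, shifting all colors by $p$ is monotone, so it transports $\c$-increasing chains to $\c$-increasing chains.

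For the embedding I would take $\iota$ to be this color-shift, sending an $m'$-factorization to the $m$-factorization whose cut-point multichain is padded with $c$ at indices $\leq p$ and with $e$ at indices $\geq p+m'+1$; its image is exactly $S$. Monotonicity of the shift turns a $\c$-increasing chain witnessing $v \leq v'$ into one from $\iota(v)$ to $\iota(v')$, giving $\iota(v) \leq \iota(v')$ by \Cref{thm:greedy_comparison}. Conversely, if $\iota(v) \leq \iota(v')$ then by \Cref{cor:unique_increasing_chain} the unique $\c$-increasing chain between them lies in the convex set $[\iota(v),\iota(v')] \subseteq S$, and shifting back produces a $\c$-increasing chain from $v$ to $v'$; hence $\iota$ is an order isomorphism onto $(S,\leq)$. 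As $\iota$ carries the bottom and top of $\mCamb{m'}{W,\c}$ to the minimum and maximum of $S$, convexity of $S$ shows that $S$ is the interval between those two images.

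For the projection I would again use the cut-point lemma: the map $\pi$ keeping $m'$ consecutive components $w_j,\dots,w_{j+m'-1}$ retains a consecutive block $u_{p+1},\dots,u_{p+m'}$ of cut points, collapsing those below to $c$ and those above to $e$. Given a $\c$-increasing chain witnessing $\w \leq \w'$, each rotation alters a single cut point $u_{a+1}$: if $a+1$ lies in the retained block the step projects to a genuine increasing rotation of $\mCamb{m'}{W,\c}$ (matching that cut point), and otherwise $\pi$ is unchanged. In every case $\pi(\w) \leq \pi(\w')$, so $\pi$ is order-preserving; it is visibly surjective, and $\pi \circ \iota = \mathrm{id}$ for matching windows. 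The main obstacle is the cut-point lemma itself: making precise that the conjugation built into an increasing rotation preserves all partial products but one, and that the remaining change is a single absolute-order cover. This is what turns the apparently global rotation into a one-coordinate move, and in particular it dissolves the potential mismatch of conjugation patterns at the boundary colors in the projection—since a factorization is determined by its cut points, only the single moved cut point matters. The convexity and chain-transport arguments are then routine given \Cref{thm:greedy_comparison,cor:unique_increasing_chain}.
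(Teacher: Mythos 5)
Your cornerstone ``cut-point lemma'' is false, and the projection half of your argument collapses with it. The conjugation built into an increasing rotation does \emph{not} simply left-multiply $u_{a+1}$ by $r$: the conjugated letters of $\w''$ must be re-colored to remain a subword of $\R(\c)^{m+1}$, and a letter of color $a+1$ whose conjugate lies after $r$ in $\R(\c)$ comes back with color $a$ (and vice versa), so letters can cross the cut. Concretely, take $W = \mathfrak{S}_3$, $\c = \s\t$, $s = (12)$, $t = (23)$, so that $\R(\c) = (s,\, sts,\, t)$, and $m = 2$. For $\w = (sts)^{(1)}\, s^{(2)}$, the rotation at $(sts)^{(1)}$ conjugates the in-between letter $s^{(2)}$ into $t$, which is re-colored $t^{(1)}$ (under any reading of the re-coloring its color is at most $1$), giving $t^{(1)}\,(sts)^{(2)}$: the cut point $u_2$ moves from $s$ to $sts$, not to $(sts)\cdot s = c$, and $s$ and $sts$ are not even comparable in $\Abs{W}$, both being reflections. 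What is true is only the weaker statement that a rotation at color $a$ changes no cut point other than $u_{a+1}$ (the product of the whole conjugated segment is preserved). That weaker fact, together with the observation that the color shift commutes with rotations at the level of factorizations, is enough to repair your embedding half: convexity of $S$ and the chain-transport argument survive.

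The projection half cannot be repaired this way, and it fails exactly at the boundary colors you dismiss. The effect of rotating at a reflection depends on which letters of the factorization lie strictly between $r^{(a)}$ and $r^{(a+1)}$, and the boundary color classes of $\pi(\w)$ --- the $\c$-increasing words of $c\,u_{q+1}^{-1}$ and of $u_{q+m'}$ --- are not color classes of $\w$, so the same reflection produces different cut-point changes in $\w$ and in $\pi(\w)$. Concretely, with $m = 3$ and $\w = (sts)^{(1)}\,s^{(3)}$, the rotation at $(sts)^{(1)}$ conjugates nothing and gives $(sts)^{(2)}\,s^{(3)}$, i.e.\ the multichain step $(s,s,c) \cov{} (s,c,c)$; projecting onto the middle component, the step is $s \to c$, which is \emph{not} a cover in $\Cambnc{W,\c}$ (the only cover of $s$ there is $sts$), while rotating $\pi(\w) = (sts)^{(0)}\,s^{(1)}$ at the same reflection yields the cut point $sts \neq c$. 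So your claim that every step projects to an increasing rotation or to equality is false, and with it your proof that $\pi$ is order-preserving; establishing $\pi(\w) \leq \pi(\w')$ across such boundary steps is precisely the nontrivial content of the proposition. The paper's proof avoids this trap: it erases the last (resp.\ first) component by chasing all reflections of color $0$ (resp.\ color $m$), rotating at them in the order $\leq_\c$ --- which, by the argument above, modifies only that one component --- and deduces order-preservation of this chasing from the $\c$-increasing-chain criterion of \Cref{thm:greedy_comparison}; the embedding half is obtained by identifying $\mCamb{m'}{W,\c}$ with the interval $[\bwo^{i}, \bwo^{i+m'}]$ of $\mCamb{m}{W,\c}$.
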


  \begin{proof}
    For any $i \leq m-m'$, the $m'$-Cambrian lattice $\mCamb{m'}{W, \c}$ is isomorphic to the interval $[\bwo^{i}, \bwo^{i+m'}]$ in $\mCamb{m}{W, \c}$.
    Conversely, for $\w$ an $m$-factorization of $\c$, we can chase its reflections of color $0$ by rotating at them in the order $\leq_{\c}$. 
    This only modifies the last component of the corresponding $m$-noncrossing partition, and respects the order thanks to~\Cref{thm:greedy_comparison}, so that we can erase its last component.
    A similar argument applies for the erasure of the first component.
  \end{proof}


  Hence, two comparable $m$-noncrossing partitions satisfy the vertical condition~(\Cref{item:vert-cond}).

  \begin{corollary}\label{cor:m_cambrian_proj}
    Let $w_{(m)} = (w_i)_{1 \leq i \leq m}$ and $w'_{(m)} = (w'_i)_{1 \leq i \leq m}$ be two $m$-noncrossing partitions such that $w_{(m)} \leq w'_{(m)}$ in $\mCamb{m}{W, \c}$.
    For all $1 \leq i \leq m$, $w_i \leq w'_i$ in $\Cambnc{W, \c}$.
  \end{corollary}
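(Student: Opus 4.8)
The plan is to obtain the corollary directly from the projection half of~\Cref{prop:embedding_projecting_cambrian}, iterating the component-erasure maps until only the $i$-th component remains.

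First I would record the two elementary projections supplied by~\Cref{prop:embedding_projecting_cambrian} in the case $m' = m-1$. Erasing the last component is an order-preserving map $\pi_{\mathrm{top}}\colon \mCamb{m}{W,\c} \to \mCamb{m-1}{W,\c}$ sending an $m$-noncrossing partition $w_{(m)} = (w_i)_{1 \leq i \leq m}$ to $(w_1, \ldots, w_{m-1})$; dually, erasing the first component gives an order-preserving map $\pi_{\mathrm{bot}}$ sending $w_{(m)}$ to $(w_2, \ldots, w_m)$. Here I would verify carefully, via the color-relabeling underlying~\Cref{prop:embedding_projecting_cambrian}, that chasing the color-$0$ reflections alters only the last component $w_m$ (the product over colors $\geq 1$), so that after relabeling colors $1, \ldots, m$ to $0, \ldots, m-1$ the remaining components $w_1, \ldots, w_{m-1}$ are preserved verbatim rather than shifted in value. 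This indexing bookkeeping is the only delicate point.

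Next, to single out the $i$-th component for fixed $i$ with $1 \leq i \leq m$, I would compose these maps: applying $\pi_{\mathrm{top}}$ exactly $m-i$ times lands in $\mCamb{i}{W,\c}$ with image $(w_1, \ldots, w_i)$, and then applying $\pi_{\mathrm{bot}}$ exactly $i-1$ times lands in $\mCamb{1}{W,\c} = \Cambnc{W,\c}$ with image $w_i$. Since each factor is order-preserving, the composite $\pi_i\colon \mCamb{m}{W,\c} \to \Cambnc{W,\c}$ is an order-preserving map satisfying $\pi_i(w_{(m)}) = w_i$.

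Finally, applying $\pi_i$ to the hypothesis $w_{(m)} \leq w'_{(m)}$ yields $w_i \leq w'_i$ in $\Cambnc{W,\c}$, which is precisely the vertical condition. The only real obstacle is the first step's bookkeeping---confirming the two erasure maps act on components exactly as claimed; once that is settled, the statement follows immediately from monotonicity of $\pi_i$.
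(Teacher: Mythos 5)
Your proposal is correct and matches the paper's approach: the paper also deduces this corollary directly from the projection half of \Cref{prop:embedding_projecting_cambrian}, whose proof is precisely the composition of the single-component erasure maps (erase-last via chasing color-$0$ reflections, erase-first dually) that you spell out. Your explicit bookkeeping of $\pi_{\mathrm{top}}$ and $\pi_{\mathrm{bot}}$ is just a more detailed rendering of the paper's invocation of the projection onto one consecutive component.
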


  \begin{theorem}\label{thm:comparison_criterion}
    Two $m$-noncrossing partitions $\w$ and $\w'$ are comparable in the $m$-Cambrian lattice if and only if they satisfy the vertical and diagonal conditions (\Cref{item:vert-cond,item:diag-cond} in~\Cref{sec:new-def}).
  \end{theorem}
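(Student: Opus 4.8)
The plan is to route both implications through the intermediate objects that the greedy algorithm of \Cref{prop:greedy_algorithm} passes through. For $0 \le t \le m$, let $\mathsf{z}_t$ denote the candidate $m$-noncrossing partition whose first $m-t$ components are $w_1 \leqr \cdots \leqr w_{m-t}$ (copied from $\w$) and whose last $t$ components are $w'_{m-t+1} \leqr \cdots \leqr w'_m$ (copied from $\w'$), so that $\mathsf{z}_0 = \w$ and $\mathsf{z}_m = \w'$. A direct check on colored reflections shows that $\mathsf{z}_t$ is exactly the state of the greedy run from $\w$ to $\w'$ after the first $t$ copies of $\R(\c)$ have been processed: rotating a reflection from color $t-1$ to color $t$ modifies only the component of index $m-t+1$ and conjugates reflections of color at most $t$, so after $t$ copies the components of index $\le m-t$ are still those of $\w$ while those of index $> m-t$ have been carried to $\w'$. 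The crucial structural observation is that $\mathsf{z}_t$ and $\mathsf{z}_{t+1}$ differ in the single component of index $k := m-t$, which equals $w_k$ in $\mathsf{z}_t$ and $w'_k$ in $\mathsf{z}_{t+1}$; moreover the only adjacency of $\mathsf{z}_t$ not inherited from $\w$ or $\w'$ is $w_{m-t} \leqr w'_{m-t+1}$, so $\mathsf{z}_t$ is a genuine multichain in $\NCL{W,c}$ precisely when the diagonal inequality of \Cref{item:diag-cond} at index $i = m-t$ holds.

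For the forward implication, suppose $\w \le \w'$ in $\mCamb{m}{W,\c}$. The vertical condition (\Cref{item:vert-cond}) is immediate from \Cref{cor:m_cambrian_proj}. For the diagonal condition, \Cref{thm:greedy_comparison} guarantees that the greedy run from $\w$ to $\w'$ succeeds, so each $\mathsf{z}_t$ is an actual $m$-factorization reached along the run; being an $m$-noncrossing partition, its consecutive components satisfy $w_{m-t} \leqr w'_{m-t+1}$. As $t$ runs over $1, \ldots, m-1$, this is exactly \Cref{item:diag-cond} at every index $i = m-t$.

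For the converse, assume the vertical and diagonal conditions. By the observation above, the diagonal condition makes every $\mathsf{z}_t$ a well-defined $m$-noncrossing partition. It then suffices to prove $\mathsf{z}_t \le \mathsf{z}_{t+1}$ in $\mCamb{m}{W,\c}$ for each $t$, since transitivity of the lattice order will give $\w = \mathsf{z}_0 \le \mathsf{z}_m = \w'$. Because $\mathsf{z}_t$ and $\mathsf{z}_{t+1}$ agree in every component but the one of index $k = m-t$, this is a single-component move, governed only by the two colors $t$ and $t+1$ with all surrounding colors frozen. Freezing the remaining components amounts to fixing a set of letters of the $m$-Cambrian search word, so by the reduction machinery of \Cref{prop:rank_reduction} together with \Cref{prop:embedding_projecting_cambrian}, the sub-poset obtained by varying only component $k$ is isomorphic to an interval inside a $1$-Cambrian lattice, on which the order is the Cambrian order of that component. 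The vertical inequality $w_k \le w'_k$ then supplies the comparison $\mathsf{z}_t \le \mathsf{z}_{t+1}$, and letting $t$ range over $0, \ldots, m-1$ (so $k$ ranges over all indices) completes the proof.

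The main obstacle is precisely this last single-component reduction. The frozen surrounding components $w_1, \dots, w_{k-1}$ and $w'_{k+1}, \dots, w'_m$ are arbitrary rather than extremal, so the single-component sub-poset is not literally the embedded $\mCamb{1}{W,\c}$ of \Cref{prop:embedding_projecting_cambrian}, but only an interval in a $1$-Cambrian lattice attached to the noncrossing interval $[w_{k-1}, w'_{k+1}]$; one must verify that the ambient Cambrian order restricts compatibly and that the vertical inequality transfers to this localized Cambrian order. This is exactly where the shift operators of \Cref{prop:shifted_cambrian} and the position-compatibility bookkeeping of \Cref{prop:shift_first_letter_4} are needed, reducing the move to colors $t, t+1$ in the correct conjugate of a parabolic subgroup. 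The remaining arguments — the rotation bookkeeping identifying $\mathsf{z}_t$ with the greedy snapshot, and the equivalence between the diagonal inequalities and the validity of the $\mathsf{z}_t$ — are routine given the earlier results.
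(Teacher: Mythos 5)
Your proposal is correct and follows essentially the same route as the paper: the forward direction extracts the vertical condition from \Cref{cor:m_cambrian_proj} and the diagonal condition from the intermediate states (snapshots) of the greedy algorithm after each copy of $\R(\c)$, and the converse builds exactly the paper's interpolating sequence (your $\mathsf{z}_t$ is the paper's $w^{[i]}$), reduces each single-component step via \Cref{prop:rank_reduction}, and invokes the vertical inequality to compare consecutive terms. Your explicit flagging of the subtlety that the frozen components are arbitrary rather than extremal is a point the paper's own proof passes over tersely, but the underlying argument is the same.
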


  \begin{proof}
    For $m = 1$, the proof is immediate. Suppose $m > 1$. 
    Let $\w \leq \w'$ in $\mCambnc{m}{W, \c}$, and $w_{(m)}$ and $w'_{(m)}$ be the corresponding $m$-noncrossing partitions. 
    By~\Cref{cor:m_cambrian_proj}, the vertical condition is satisfied.
    During the greedy algorithm in~\Cref{prop:greedy_algorithm} that turns $\w$ into $\w'$, when we read the $i$-th copy of $\R(\c)$, we turn $w_{m-i}$ into $w'_{m-i}$ without changing the other components.
    Hence, we have the diagonal condition $w_{m-i-1} \leq_\mathcal{R} w'_{m-i}$ for all $i$.

    Reciprocally, we start with two $m$-noncrossing partitions $w_{(m)}$ and $w'_{(m)}$ such that $w_i \leq  w'_i$ for all $i$, and $w_i \leq_\mathcal{R} w'_{i+1}$ for all $1 \leq i < m$. 
    Since $w_{m-i} \leq_\mathcal{R} w'_{m-i+1}$, we take $w^{[i]} \in \mNC{m}{W, \c}$ to be $w_1 \leq_\mathcal{R} \cdots \leq_\mathcal{R} w_{m-i} \leq_\mathcal{R} w'_{m-i+1} \leq_\mathcal{R} \cdots  \leq_\mathcal{R} w'_m$. 
    We observe that $w^{[i]}$ and $w^{[i+1]}$ only differ on reflections of color $i$ and $i+1$.
    We use~\Cref{prop:rank_reduction} to remove the other common reflections.
    Since $w_i \leq w'_i$ in $\Camb{W, \c}$, we have $w^{[i]} \leq w^{[i+1]}$ in $\mCamb{m}{W, \c}$, which means $w_{(m)} = w^{[0]} \leq w^{[m]} = w'_{(m)}$.
  \end{proof}

\section{A poset on Cambrian intervals} \label{sec:cambint_poset}

  Inspired by our new definition, we define a binary relation $\preccurlyeq$ on intervals of the Cambrian lattice $\Cambnc{W, \c}$ where $[\u, \u'] \preccurlyeq [\w, \w']$ if and only if $\u \leq_\mathcal{R} \w$, $\u \leq_\mathcal{R} \w'$, and $\u' \leq_\mathcal{R} \w'$.
  Note that transitivity of this relation is here easily proven.
  However, reflexivity fails for certain elements that we call \mydef{red elements}.
  Enforcing reflexivity extends $\preccurlyeq$ to a poset $\CambInt{2}{W, \c}$.
  This construction can be generalized to $k$-multichains of $\Cambnc{W, \c}$  with horizontal and diagonal comparisons, leading to a poset $\CambInt{k}{W, \c}$ with red elements.

\begin{proposition}\label{prop:multichains_in_cambint}
  For any $k$ and $m$, the $m$-multichains of $\CambInt{k}{W, \c}$ are in bijection with $k$-multichains in $\mCamb{m}{W, \c}$, where repetition of red elements is forbidden.
\end{proposition}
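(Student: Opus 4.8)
The plan is to encode both kinds of objects as a single $m \times k$ array $M = (M_{t,i})_{1 \le t \le m,\, 1 \le i \le k}$ of noncrossing partitions in $\NC{W, c}$, and to realise the bijection as plain transposition: reading $M$ by rows yields an $m$-multichain in $\CambInt{k}{W, \c}$, and reading it by columns yields a $k$-multichain in $\mCamb{m}{W, \c}$. Concretely, an $m$-multichain $U^{(1)} \preccurlyeq \cdots \preccurlyeq U^{(m)}$ in $\CambInt{k}{W, \c}$ is a sequence of Cambrian $k$-multichains $U^{(t)} = (M_{t,1} \leq \cdots \leq M_{t,k})$ of $\Cambnc{W, \c}$, placed as the rows of $M$; a $k$-multichain $W^{(1)} \mleq \cdots \mleq W^{(k)}$ in $\mCamb{m}{W, \c}$ is a sequence of $m$-noncrossing partitions $W^{(i)} = (M_{1,i} \leqr \cdots \leqr M_{m,i})$, placed as the columns of $M$.

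First I would translate the defining inequalities of each side into conditions on the entries of $M$ and observe they coincide. On the $\CambInt{k}$ side: each row being a Cambrian chain gives $M_{t,i} \leq M_{t,i+1}$; and each relation $U^{(t)} \preccurlyeq U^{(t+1)}$ unfolds, by the definition of $\preccurlyeq$ through its horizontal and diagonal comparisons, into $M_{t,i} \leqr M_{t+1,i}$ for all $i$ together with $M_{t,i} \leqr M_{t+1,i+1}$ for $i < k$. On the $\mCamb{m}$ side I would invoke \Cref{thm:comparison_criterion} to unfold $W^{(i)} \mleq W^{(i+1)}$ into the vertical condition $M_{t,i} \leq M_{t,i+1}$ and the diagonal condition $M_{t,i} \leqr M_{t+1,i+1}$ (for $t < m$), while the chain inside each $W^{(i)}$ contributes $M_{t,i} \leqr M_{t+1,i}$. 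The point is that the two systems are literally identical: the Cambrian comparison $\leq$ along rows matches the vertical Cambrian condition along columns, the absolute comparison $\leqr$ between consecutive rows matches the absolute chains inside the columns, and the two diagonal conditions are the same inequality $M_{t,i} \leqr M_{t+1,i+1}$ over the same range $1 \le t \le m-1$, $1 \le i \le k-1$. So, up to repetitions, both objects are exactly the arrays $M$ satisfying this one symmetric system, and transposition exchanges the two readings.

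The main obstacle, and the only point subtler than transposition, is the red-element bookkeeping arising because $\preccurlyeq$ is only a relation whose reflexivity was enforced by hand. An $m$-multichain in $\CambInt{k}{W, \c}$ may contain a repeated step $U^{(t)} = U^{(t+1)}$; for such a step the horizontal inequalities $M_{t,i} \leqr M_{t+1,i}$ hold trivially, so the only inequality that can fail is the diagonal $M_{t,i} \leqr M_{t+1,i+1} = M_{t,i+1}$. By the very definition of red elements, the conjunction of these diagonal comparisons over all $i$ is exactly the reflexivity of $\preccurlyeq$ at $U^{(t)}$, hence it holds precisely when $U^{(t)}$ is \emph{not} red. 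Thus a repeated step transposes to a valid consecutive pair of columns on the $\mCamb{m}$ side if and only if the repeated element is non-red, which is exactly the clause that repetition of red elements is forbidden; genuine strict steps and non-red repetitions always supply all required inequalities. Conversely, the columns of any $k$-multichain in $\mCamb{m}{W, \c}$ satisfy the full diagonal system, so the transpose reads off as a valid $m$-multichain in $\CambInt{k}{W, \c}$ containing no repeated red element. Checking that the two readings are mutually inverse is then routine, so the only genuinely delicate step is identifying the failure of reflexivity at a red element with the failure of a diagonal comparison, which is what makes ``no red repetition'' the exact bridge between the artificially reflexive relation $\preccurlyeq$ and the honest order $\mleq$.
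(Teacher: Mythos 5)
Your proof is correct. The paper, being an extended abstract, states \Cref{prop:multichains_in_cambint} without proof, so there is no argument to compare against line by line; but your approach---encoding both sides as an $m \times k$ array of noncrossing partitions, unfolding $\mleq$ via \Cref{thm:comparison_criterion}, matching row/column chains and the two diagonal systems under transposition, and identifying the failure of reflexivity at a red element with the failure of a diagonal comparison at a repetition---is exactly the argument that the definition of $\CambInt{k}{W, \c}$ is designed to make work, i.e.\ the intended proof. The only ingredient you supply beyond what the paper spells out is the precise definition of $\preccurlyeq$ on $k$-multichains for $k > 2$, which the paper only sketches as ``horizontal and diagonal comparisons''; your reading ($u_i \leqr w_i$ for all $i$, and $u_i \leqr w_{i+1}$ for $i < k$) is the natural generalization of the stated $k = 2$ case and the one under which the transposition is an exact match.
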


  For a finite poset $P$, its Zeta polynomial $Z_P$ counts its $m$-multichains when evaluated at $m+1$.
  It is not applicable here due to red elements, but there still seems to be a polynomial that enumerates multichains in $\CambInt{2}{W, \c}$ with no repeated red element.

  Recall that the Cambrian lattice specializes to the Tamari lattice in the linear type $A$ case. 
  Even in this context, the poset on Tamari intervals we define here seems new to our knowledge, see~\cite{combe_cubical_2023} for another poset structure on Tamari intervals.
  There is another generalization of the Tamari lattice called the $m$-Tamari lattice~\cite{bergeron_higher_2012}, different from the linear type $A$ $m$-Cambrian lattice.
  We conclude by mentioning a conjecture in~\cite[Section~6.10]{stump_cataland_2018}. 

\begin{conjecture}[{\cite{stump_cataland_2018}}]
  The $m$-Tamari lattice and the linear type $A$ $m$-Cambrian lattice have the same number of intervals.
\end{conjecture}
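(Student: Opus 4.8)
The plan is to turn the conjecture into an enumeration problem using the interval poset $\CambInt{2}{\mathfrak{S}_n, \c}$ introduced above. By \Cref{prop:multichains_in_cambint} with $k = 2$, the intervals of the linear type $A$ $m$-Cambrian lattice $\mCamb{m}{\mathfrak{S}_n, \c}$ are in bijection with the $m$-multichains of $\CambInt{2}{\mathfrak{S}_n, \c}$ that use no red element more than once. Since the Cambrian lattice degenerates to the Tamari lattice in linear type $A$, this realizes the left-hand side of the conjecture as a multichain count in a concrete poset on Tamari intervals. The first step is therefore to describe $\CambInt{2}{\mathfrak{S}_n, \c}$ combinatorially in type $A$—in particular to characterize its red elements—so that the multichain count becomes explicit.

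First I would aim for a closed form, as a polynomial in $m$ for each fixed $n$, counting the $m$-multichains of $\CambInt{2}{\mathfrak{S}_n, \c}$ with no repeated red element. The difficulty is that the red elements obstruct the usual Zeta-polynomial machinery: one cannot simply evaluate a Zeta polynomial at $m+1$, precisely because chains repeating a red element are excluded. I would resolve this either by an inclusion--exclusion over the red elements, or by passing to a modified poset in which each red element is split so as to restore a genuine order complex, computing its Zeta polynomial, and then subtracting the chains that traverse a former red element twice. An alternative, following the method of \cite{bousquet_number_2011}, is to set up a functional equation with a catalytic variable for the generating function $F(x,t) = \sum_{n,m \geq 0} N_{n,m}\, x^n t^m$, where $N_{n,m}$ counts these restricted multichains, and to verify that it coincides with the functional equation governing $m$-Tamari intervals.

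The cleanest outcome, and the one I would ultimately pursue, is a direct bijection between $m$-Tamari intervals and the $m$-multichains of $\CambInt{2}{\mathfrak{S}_n, \c}$ avoiding repeated red elements. On the Tamari side an $m$-Tamari interval is a nested pair of $m$-ballot paths; on the Cambrian side, by \Cref{thm:comparison_criterion}, the data of such a multichain unwinds into nested families of $m$-noncrossing partitions of $\NC{\mathfrak{S}_n, \c}$ constrained by the vertical and diagonal conditions. I would try to match these two descriptions through an intermediate encoding by labeled $(m+1)$-ary trees, seeking a normal form that is uniform in $m$ and thereby explains the coincidence structurally rather than through the product formula of \cite{bousquet_number_2011}.

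The main obstacle I anticipate is understanding the red elements: which Tamari intervals become red, why reflexivity of $\preccurlyeq$ fails exactly for them, and how the ``no repeated red element'' constraint interacts with the diagonal comparisons inside a multichain. This is the genuinely new combinatorial content, since the target count of \cite{bousquet_number_2011} is a product formula with no evident multichain interpretation, so reconciling it with a Zeta-type enumeration will hinge on a precise, bijective handling of the red elements. A secondary subtlety is that the argument must be carried out uniformly in the choice of Coxeter word $\c$; one expects the interval count to be independent of $\c$, and establishing this independence first would be a natural warm-up before attacking the full enumeration.
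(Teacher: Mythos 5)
You should first be aware that this statement is not proved in the paper at all: it is an open conjecture of Stump, Thomas and Williams~\cite{stump_cataland_2018}, which the paper restates in its final section, offering the poset $\CambInt{2}{W,\c}$ and \Cref{prop:multichains_in_cambint} only as a possible \emph{first step} towards it. There is therefore no proof in the paper to compare with --- and your text is not a proof either. Its only established step is the opening reduction, which is exactly the paper's own observation: by \Cref{prop:multichains_in_cambint} with $k=2$, intervals of the linear type $A$ $m$-Cambrian lattice $\mCamb{m}{\mathfrak{S}_n,\c}$ correspond to $m$-multichains of $\CambInt{2}{\mathfrak{S}_n,\c}$ with no repeated red element. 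Everything after that is a menu of strategies --- inclusion--exclusion over red elements, a catalytic functional equation in the style of~\cite{bousquet_number_2011}, or a direct bijection through labelled trees --- none of which is carried out. The steps that would constitute the actual mathematical content (a combinatorial characterization of the red elements in type $A$, a proof that the restricted multichain count is the $m$-Tamari interval count, or the construction and verification of a bijection) are precisely the ones you defer and yourself label the ``main obstacle''. Listing the obstacles of an open problem is not a proof with a gap; it is the absence of the proof.

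Two further cautions if you pursue this program. First, an $m$-Tamari interval is not merely a nested pair of $m$-ballot paths: comparability in the $m$-Tamari order is strictly stronger than pointwise domination, so the Tamari-side data you intend to match must encode the order relation itself, not just nesting; this is one reason the only known enumeration~\cite{bousquet_number_2011} goes through a functional equation with a catalytic variable and has no known chain-theoretic or Zeta-polynomial interpretation --- reconciling the two is the heart of the difficulty, not a bookkeeping step. Second, your proposed warm-up --- showing that the interval count of the type-$A$ $m$-Cambrian lattice is independent of the Coxeter word $\c$ --- is neither needed for the conjecture (which concerns the linear Coxeter element only) nor established anywhere, so it cannot be assumed as a stepping stone; as stated it is an additional open question stacked on top of the one you set out to solve.
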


\printbibliography

\end{document}